\documentclass[12pt]{amsart}
 \usepackage{amssymb,latexsym} 
\usepackage[all]{xy}
 \setlength{\headheight}{8pt} 
\setlength{\textheight}{22.4cm}
 \setlength{\textwidth}{14.5cm} 
\setlength{\oddsidemargin}{.1cm}
 \setlength{\evensidemargin}{.1cm} 
\setlength{\topmargin}{0.2cm} 
\newtheorem{thm}{Theorem}[section] 
\newtheorem*{thm*}{Theorem}
 \newtheorem{proposition}[thm]{Proposition} 
\newtheorem{lemma}[thm]{Lemma}
\newtheorem{remark}[thm]{Remark} 
\newtheorem{cor}[thm]{Corollary} 
\theoremstyle{definition}
 \newtheorem{defn}[thm]{Definition} 
\newtheorem{ex}[thm]{Example} 
\newtheorem{notation}[thm]{Notation} 

\newcommand{\by}[1]{\stackrel{#1}{\longrightarrow}}

\newcommand{\rank}{{\rm rank}\,}
  
\newcommand{\Hom}{{\rm Hom}\,}

\newcommand{\boxtensor}{{\Box\kern-9.03pt\raise1.42pt\hbox{$\times$}}} 
\newcommand{\Z}{{\mathbb Z}} 
 
\newcommand{\tensor}{\otimes}

 \newcommand{\sL}{{\mathcal L}} 
  
\newcommand{\sO}{{\mathcal O}}

 \newcommand{\mg}{{\mathfrak g}} 
  
\renewcommand{\tilde}{\widetilde}  
\numberwithin{equation}{section} \newcounter{elno} 
 
 \newcounter{example}[section] 
\def\theexample{\thesection.\arabic{example}}

\begin{document}

\title[Frobenius pull backs of vector bundles in higher dimensions]
{Frobenius pull backs of vector bundles in higher dimensions}
\author{V. Trivedxi}
\date{}
\address{School of Mathematics, Tata Institute of Fundamental Research,
Homi Bhabha Road, Mumbai-400005, India}
\thanks{}
\subjclass{13D40}

\maketitle
\section{Introduction} 

Let $X$ be a nonsingular 
projective variety defined over an algebraically closed field $k$ of an 
arbitrary characteristic, and let $H$ be a very ample line bundle on $X$. 
Let 
$E$ be a torsion free sheaf on $X$. Then the notion of $E$ being stable 
(resp. semistable) is well known and studied. In case $E$ is not 
semistable, then one has the notion of Harder-Narasimhan filtration of 
$E$. 
In this paper, we discuss the behaviour of
Harder-Narasimhan
filtrations of torsion free sheaves on $X$, under Frobenius pull-backs.

 First we give a bound, in Lemma~\ref{l1}, on  
the instability degree of $F^*E$ in terms of the 
instability degree of $E$ and well defined invariants of $X$ and $E$.
In particular it proves a sharper version of a conjecture of X.Sun
 (Remark~3.13 in [S]), 
for $p\geq \rank~E+\dim~X-2$, if $\dim~X \geq 2$, 
and for all $p>0$, if $\dim~X = 1$.

As a corollary,  we 
give a generalization of a result of 
Shepherd-Barron (Corollary~$2^p$ in [SB]), to an arbitrary  higher dimensional
nonsingular  variety,  
for  
characteristic $p \geq  \rank~E +\dim~X -2$: {\it the instability degree of
 a Frobenius pull-back of a semistable torsion free sheaf $E$ 
 is bounded by $(1/p)(\rank(E))$(the slope of 
the destabilising sheaf of $\Omega^1_X$)}.

 Here we use  a  result of
Ilangovan-Mehta-Parameswaran [IPM] about low height representations.

In Lemma~1.8 [T2], the  author  has proved that, for 
a vector bundle $E$ over 
a curve $X$, the HN 
filtration of 
$F^{*}(E)$ is  a refinement of $F^{*}[\mbox{HN filtration of} E]$, 
under some assumptions on the characteristic of the base field.
 Here we extend this   
result, in Theorem~\ref{t1}, to  a variety of arbitrary dimension
 (again with some restrictions: 
on a
lower bound on $p$ in terms of rank $E$ and the slope of 
the destabilising sheaf of the 
cotangent bundle of  $X$).
Moreover, under the same hypothesis on $p$, we observe that  
(Corollary~\ref{cc1}) if the ranks of first and last proper subsheaves of
 the HN filtration of $E$ are equal to $1$ and $\rank~E-1$, respectively,
 then the instability degree does not change under
 any iterated  Frobenius pull back.

 We recall some examples \ref{ex} and \ref{exx} of Raynaud~[R] and Monsky~[M],
respectively,  
to show that  some lower bound  on the characteristic $p$ 
(in terms of both $\rank~E$ and $\deg~X$) is necessary, for Theorem~\ref{t1}.

One also observes (proved for curves in [T2]) that each normalised 
HN slope of a HN sheaf of $F^{s*}E$ is bounded in terms of the  slope of HN
 sheaf (of $E$), to 
which it `descends', and explicit invariants of $X$ and rank of $E$. 
Various other results proved for curves in [T2] are valid for higher dimnesion 
with some modification in the bounds. Since proofs are very
 similar to the case of curves, we have stated them without proofs. 

We would like to thank V.B. Mehta for useful discussions.

 \section{vector bundles} 
Let $X$ be a smooth projective variety of dimension $n$ over
 an algebraically closed field of 
characteristic $p > 0$. Let $E$ be  a torsion free sheaf of rank $r$ 
on $X$. We also fix a polarization $H$ of $X$. 

\begin{defn}\label{d1} A torsion-free sheaf $E$ 
 is $\mu$-semistable (with respect to the  polarization $H$), if
for all subsheaves $F \subset E$, one has
$$\mu(F) \leq \mu(E),~~\mbox{where}~~\mu(E) = (c_1(E)\cdot
H^{n-1})/ \rank(E).$$
\end{defn}

\begin{defn}\label{d2}For a torsion free sheaf $E$
on $X$, consider the Harder-Narasimhan filtration given by 
\begin{equation}\label{e11} 0 = E_0 \subset E_1\subset \cdots \subset E_l
\subset E_{l+1} = E.    
\end{equation}
Then, for $i\geq 1$, 
$$\mu_i(E) = \mu(E_i/E_{i-1})~~\mbox{and}~~ \mu_{max}(E) = 
\mu(E_1)~~~\mbox{and}~~~ \mu_{min}(E) = 
\mu(E/E_l).$$  
The {\it instability degree} $I(E)$  of $E$ is defined as 
$I(E) = \mu_{max}(E)-\mu_{min}(E)$.
\end{defn}

\begin{defn}\label{d6}If $X$ is a projective variety defined over an   
algebraically closed field of characteristic $p>0$, then the absolute
Frobenius
morphism $F:X\to X$ is a morphism of schemes which is identity on the
underlying set of $X$ and on the underlying sheaf of
rings  $F^{{\#}}:\sO_X\to \sO_X$ is the $p^{th}$
power map. \end{defn}

We recall the following well known  
\begin{lemma}\label{l3} If  
 $E_1$ and $E_2$ are two torsion free sheaves on $X$ then 
$$\mu_{min}(E_1) > \mu_{max}(E_2) \implies \Hom_{\sO_X}(E_1, E_2) = 0.$$ 
\end{lemma}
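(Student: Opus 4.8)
The plan is to argue by contradiction. I would suppose that $\phi\in\Hom_{\sO_X}(E_1,E_2)$ is nonzero, and let $I=\phi(E_1)\subseteq E_2$ be its image, so that $I$ is simultaneously a quotient of $E_1$ and a subsheaf of $E_2$. Since $I$ is a nonzero subsheaf of the torsion free sheaf $E_2$, it is itself torsion free of strictly positive rank, so $\mu(I)=(c_1(I)\cdot H^{n-1})/\rank(I)$ is defined. The goal is then to show
$$\mu_{\min}(E_1)\ \le\ \mu(I)\ \le\ \mu_{\max}(E_2),$$
which contradicts the hypothesis $\mu_{\min}(E_1)>\mu_{\max}(E_2)$ and forces $\phi=0$.

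The right-hand inequality is immediate, since $\mu_{\max}(E_2)$ is the slope of the maximal destabilising subsheaf of $E_2$, equivalently the supremum of the slopes $\mu(G)$ over nonzero subsheaves $G\subseteq E_2$, and $I$ is such a subsheaf. For the left-hand inequality I would write the Harder--Narasimhan filtration of $E_1$ as $0=N_0\subset N_1\subset\cdots\subset N_m=E_1$, so that the slopes $\mu(N_j/N_{j-1})$ are nonincreasing in $j$ with $\mu(N_m/N_{m-1})=\mu_{\min}(E_1)$, let $q\colon E_1\twoheadrightarrow I$ be the quotient map, and push the filtration forward to $0=q(N_0)\subseteq q(N_1)\subseteq\cdots\subseteq q(N_m)=I$. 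For each $j$ the induced surjection $N_j/N_{j-1}\to q(N_j)/q(N_{j-1})$ has kernel a subsheaf of the $\mu$-semistable sheaf $N_j/N_{j-1}$ (Definition~\ref{d1}), hence of slope at most $\mu(N_j/N_{j-1})$ when nonzero; combining this with additivity of $c_1(\,\cdot\,)\cdot H^{n-1}$ and of $\rank$ in short exact sequences and with $c_1(N_j/N_{j-1})\cdot H^{n-1}=\mu(N_j/N_{j-1})\rank(N_j/N_{j-1})$ gives
$$c_1\bigl(q(N_j)/q(N_{j-1})\bigr)\cdot H^{n-1}\ \ge\ \mu(N_j/N_{j-1})\,\rank\bigl(q(N_j)/q(N_{j-1})\bigr)\ \ge\ \mu_{\min}(E_1)\,\rank\bigl(q(N_j)/q(N_{j-1})\bigr)$$
for every $j$. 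Summing over $j$, the left-hand sides telescope to $c_1(I)\cdot H^{n-1}$ and the ranks to $\rank(I)$, yielding $\mu(I)\ge\mu_{\min}(E_1)$.

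Since this is a standard fact, I do not anticipate a real obstacle; the one step needing care is the lower bound, namely the estimate of the kernels of the maps $N_j/N_{j-1}\to q(N_j)/q(N_{j-1})$. These kernels are torsion free, being subsheaves of the torsion free semistable pieces $N_j/N_{j-1}$, so $\mu$-semistability bounds their slopes from above exactly as used; and any codimension-one torsion that a graded piece $q(N_j)/q(N_{j-1})$ might carry only increases $c_1(\,\cdot\,)\cdot H^{n-1}$ and so does no harm to the inequality. An alternative route would be to first treat the case of two $\mu$-semistable sheaves with $\mu(E_1)>\mu(E_2)$ — where a nonzero homomorphism would have image of slope simultaneously at least $\mu(E_1)$ and at most $\mu(E_2)$ — and then reduce the general statement to it by inducting on the lengths of the two Harder--Narasimhan filtrations.
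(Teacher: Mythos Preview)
Your argument is correct and is the standard one: the image $I=\phi(E_1)$ is a nonzero torsion free subsheaf of $E_2$, hence $\mu(I)\le\mu_{\max}(E_2)$, and it is a quotient of $E_1$, hence $\mu(I)\ge\mu_{\min}(E_1)$; the detailed justification you give for the second inequality via the pushed-forward HN filtration is fine, as is the alternative reduction to the semistable case you sketch at the end.

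There is nothing to compare against in the paper itself: the lemma is introduced with the phrase ``We recall the following well known'' and is stated without proof. Your write-up would serve perfectly well as the omitted argument.
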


We recall the following result (see [SB], Proposition~1).

\begin{lemma}\label{l0}Let $E$
 be a semistable torsion free sheaf on $X$ such 
that 
$F^*E$ is not semistable. Let 
$$0 = F_0 \subset F_1\subset \cdots \subset F_l
\subset F_{l+1} = F^*E $$
be the Harder-Narasimhan filtration. Then there exists a canonical 
connection $\triangledown_{can}:F^*E\rightarrow F^*E\tensor \Omega^1_X$  
such that, for every $1\leq i \leq l$, the $\sO_X$-homomorphisms
$F_i\rightarrow (F^*E/F_i)\tensor \Omega^1_X$ induced by $\triangledown_{can}$ 
are 
nontrivial.\end{lemma}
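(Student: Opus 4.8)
The plan is to take $\nabla_{can}$ to be the classical Cartier connection on a Frobenius pull-back and then to prove that each induced map $\bar\nabla_i\colon F_i\to (F^*E/F_i)\otimes\Omega^1_X$ is nonzero by a descent argument: if one of them vanished, the Harder--Narasimhan term $F_i$ would be stable under $\nabla_{can}$, hence (its connection having $p$-curvature zero) would descend through $F$ to a subsheaf $G_i\subseteq E$; a slope computation then shows $\mu(G_i)>\mu(E)$, contradicting the semistability of $E$.

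First I would construct $\nabla_{can}$. Working locally, write $F^*E=\sO_X\otimes_{F^{\#}\sO_X}E$; since the gluing data of $F^*E$ involve only $p$-th powers of functions and $d(f^p)=0$ in characteristic $p$, the rule $\nabla_{can}(f\otimes e)=df\otimes e$ is well defined on generators, respects the relations, and globalizes to an integrable connection $\nabla_{can}\colon F^*E\to F^*E\otimes\Omega^1_X$. The same local computation shows that its $p$-curvature vanishes: on the local frame $1\otimes e$ one has $\nabla_{can}(1\otimes e)=0$, and in characteristic $p$ the $p$-fold iterate of a derivation equals its restricted $p$-th power, so $\psi_{\nabla_{can}}=0$. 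This is the connection of Cartier's theorem, under which $\sG\mapsto(F^*\sG,\nabla_{can})$ is an equivalence from coherent $\sO_X$-modules to coherent $\sO_X$-modules with integrable connection of $p$-curvature zero, the quasi-inverse being passage to horizontal sections $(\sH,\nabla)\mapsto\sH^{\nabla}$.

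Next I would argue by contradiction. Fix $i$ with $1\le i\le l$ and assume $\bar\nabla_i=0$, i.e.\ $\nabla_{can}(F_i)\subseteq F_i\otimes\Omega^1_X$. Then $(F_i,\nabla_{can}|_{F_i})$ is a sub-connection whose $p$-curvature, being the restriction of $\psi_{\nabla_{can}}=0$, is again zero; so by Cartier descent $F_i=F^*G_i$ for the coherent subsheaf $G_i:=F_i^{\nabla_{can}}\subseteq(F^*E)^{\nabla_{can}}=E$, the inclusion $F_i\hookrightarrow F^*E$ being the pull-back of $G_i\hookrightarrow E$. Since $F$ is faithfully flat, $G_i$ is a nonzero proper subsheaf of $E$. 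Now $\det(F^*\sG)=F^*(\det\sG)$ and $\rank(F^*\sG)=\rank(\sG)$, so $\mu(F^*\sG)=p\,\mu(\sG)$ for every coherent $\sG$; in particular $\mu(F_i)=p\,\mu(G_i)$ and $\mu(F^*E)=p\,\mu(E)$. Because $F_i$ is the $i$-th term, with $1\le i\le l$, of the Harder--Narasimhan filtration of the non-semistable sheaf $F^*E$, its slope $\mu(F_i)$ is the weighted average of the top $i$ Harder--Narasimhan slopes $\mu_1>\cdots>\mu_i$, while $\mu(F^*E)$ is the weighted average of all $l+1$ of them; since any proper prefix average of a strictly decreasing sequence exceeds the full average, $\mu(F_i)>\mu(F^*E)$, hence $\mu(G_i)>\mu(E)$ --- contradicting semistability of $E$ (and consistent with Lemma~\ref{l3} applied to $E$ and its quotients). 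Therefore every $\bar\nabla_i$ is nontrivial.

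The step I expect to be the main obstacle is the descent: one must make sure that Cartier's equivalence is available for coherent --- indeed torsion-free, not necessarily locally free --- sheaves on $X$, and that it restricts compatibly to $\nabla_{can}$-stable coherent subsheaves, so that $F_i=F^*G_i$ with $G_i\subseteq E$ really holds; once this is in place the conclusion is forced by the purely numerical slope inequality. The local construction of $\nabla_{can}$, the vanishing of its $p$-curvature, and the averaging argument for Harder--Narasimhan slopes are all routine.
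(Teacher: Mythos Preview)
The paper does not supply its own proof of this lemma; it simply recalls it as Proposition~1 of [SB]. Your argument---construct the Cartier connection $\nabla_{can}$ on $F^*E$, observe that its $p$-curvature vanishes, and use Cartier descent to show that a $\nabla_{can}$-invariant HN piece $F_i$ would descend to a subsheaf $G_i\subset E$ with $\mu(G_i)>\mu(E)$, contradicting semistability---is precisely the standard proof (and is the one in [SB]); the slope inequality $\mu(F_i)>\mu(F^*E)$ for $1\le i\le l$ via the prefix-average argument is correct. Your flagged concern about Cartier descent for merely torsion-free sheaves is not a real obstacle: on smooth $X$ the Cartier equivalence holds for all quasi-coherent sheaves, and since $F_i$ is a coherent subsheaf of $F^*E$ stable under $\nabla_{can}$, its sheaf of horizontal sections $G_i=F_i^{\nabla_{can}}$ is a coherent subsheaf of $(F^*E)^{\nabla_{can}}=E$ with $F^*G_i=F_i$.
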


\begin{notation}\label{n1} Let $l(E)$ denote the number of nontrivial subsheaves in
the HN filtration of $E$, {\it e.g.}, if $E$ has the HN filtration 
as in  Definition~\ref{d2} above, then  
$l(E) = l$.

Moreover $s(X,E)$ denotes the number of $\triangledown_{can}$-invariant subsheaves of
$F^*E$, which occur properly in the HN filtration of $F^*E$. We recall that 
$\triangledown_{can}$-invariant subsheaves 
 $F^*E$ are precisely those which descend to a subsheaf of $E$. Clearly 
$0\leq s(X,E)\leq l(F^*E)$.
\end{notation}

Now we recall the following conjecture of X. Sun (Remark~(3.13) of [S]):
$$I(F^*E) \leq (l-s)\mu_{max}(\Omega^1_X) + psI(E), $$
where $l$ and $s$ are as in Lemma~\ref{l1} below. The following Lemma is a
 modified version of this conjecture. 
In particular, it implies
 the conjecture, if $p\geq r+n-(s+2)$.
 The proof is by refining some arguments  given in the
 proof of Theoren~3.4 in [S]. 

\begin{lemma}\label{l1}Let $\dim X = n$ and $\rank E 
= r$ and let $\Omega^1_X$ denote the cotangent bundle of $X$. 
Let $l = l(F^*E)$ and let $s = s(X,E)$. 
Suppose $p 
\geq r+n-(s+2)$. Then 
$$I(F^*E) \leq (l-s)~\mu_{max}(\Omega^1_X) + \epsilon\cdot pI(E),$$
where $\epsilon = \mbox{min}\{1,s\}$.
\end{lemma}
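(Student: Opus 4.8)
The plan is to refine the argument of [S, proof of Theorem 3.4]. Fix the HN filtration $0=F_0\subset F_1\subset\cdots\subset F_l\subset F_{l+1}=F^*E$ and set $\mu_i=\mu(F_i/F_{i-1})$, so $\mu_1>\cdots>\mu_{l+1}$ and $I(F^*E)=\mu_1-\mu_{l+1}=\sum_{i=1}^{l}(\mu_i-\mu_{i+1})$. Recall that $F^*E$ carries the canonical connection $\triangledown_{can}$ for arbitrary $E$ (the Frobenius-descent connection), that a subsheaf is $\triangledown_{can}$-invariant iff it descends to a subsheaf of $E$, and that the nonvanishing of second fundamental forms of non-$\triangledown_{can}$-invariant subsheaves — the content of Lemma~\ref{l0} — holds verbatim for arbitrary $E$. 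Among $F_1,\dots,F_l$ exactly $s$ are $\triangledown_{can}$-invariant; call them $F_{i_1}\subset\cdots\subset F_{i_s}$ with $i_1<\cdots<i_s$, put $i_0=0$, $i_{s+1}=l+1$, $V_j=F_{i_{j-1}}$, $W_j=F_{i_j}$, and write $F_{i_k}=F^*E^{(k)}$ with $0=E^{(0)}\subset\cdots\subset E^{(s)}\subset E^{(s+1)}=E$, so that $W_j/V_j=F^*(E^{(j)}/E^{(j-1)})$ by right exactness of $F^*$. Split the telescoping sum into the $l-s$ drops $\mu_i-\mu_{i+1}$ with $F_i$ non-invariant and the $s$ drops $\mu_{i_j}-\mu_{i_j+1}$ across an invariant $F_{i_j}$.

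For a non-invariant $F_i$, say $i_{j-1}<i<i_j$: since $V_j$ and $W_j$ are $\triangledown_{can}$-invariant, $\triangledown_{can}$ induces a connection on $W_j/V_j$, and $F_i/V_j$ is not preserved by it (preservation is equivalent to $\triangledown_{can}$-invariance of $F_i$), so the induced second fundamental form $F_i/V_j\to (W_j/F_i)\otimes\Omega^1_X$ is a nonzero $\sO_X$-homomorphism. By Lemma~\ref{l3} this forces $\mu_i=\mu_{min}(F_i/V_j)\le\mu_{max}\bigl((W_j/F_i)\otimes\Omega^1_X\bigr)$. Now $\rank(W_j/F_i)\le\rank W_j-\rank V_j-1$, and because the $s$ invariant subsheaves split $\{0,1,\dots,r\}$ into $s+1$ consecutive rank-gaps of total $r$, each $\ge1$, we get $\rank W_j-\rank V_j\le r-s$, hence $\rank(W_j/F_i)\le r-s-1$. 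So the two tensor factors have ranks at most $r-s-1$ and $n$; the low-height theorem of [IPM] gives $\mu_{max}(U\otimes\Omega^1_X)\le\mu_{max}(U)+\mu_{max}(\Omega^1_X)$ once $p$ passes the explicit bound in those two ranks, and $p\ge r+n-(s+2)$ is exactly this bound. Since $\mu_{max}(W_j/F_i)=\mu_{i+1}$, we obtain $\mu_i-\mu_{i+1}\le\mu_{max}(\Omega^1_X)$ for each of the $l-s$ non-invariant indices, contributing at most $(l-s)\mu_{max}(\Omega^1_X)$. If $s=0$ these are all the drops and $\epsilon=0$, finishing the proof; so assume $s\ge1$, $\epsilon=1$.

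It remains to bound the invariant drops by $pI(E)$. The HN slopes of $W_j/V_j=F^*(E^{(j)}/E^{(j-1)})$ are $\mu_{i_{j-1}+1}>\cdots>\mu_{i_j}$, so its top slope is $\mu_{i_{j-1}+1}$ and its bottom slope is $\mu_{i_j}$; combined with $\mu_{max}(F^*Q)\ge p\,\mu_{max}(Q)$ and $\mu_{min}(F^*Q)\le p\,\mu_{min}(Q)$ (the latter from $\mu(F^*(\cdot))=p\,\mu(\cdot)$ applied to a minimal destabilizing quotient) and $\mu_{min}(Q)\le\mu(Q)\le\mu_{max}(Q)$, this gives for each $j$
$$\mu_{i_j}-\mu_{i_j+1}\ \le\ p\,\mu\bigl(E^{(j)}/E^{(j-1)}\bigr)-p\,\mu\bigl(E^{(j+1)}/E^{(j)}\bigr).$$
Summing over $j=1,\dots,s$ telescopes the right side to $p\bigl(\mu(E^{(1)})-\mu(E/E^{(s)})\bigr)$, and since $E^{(1)}\subset E$ and $E/E^{(s)}$ is a quotient of $E$ we have $\mu(E^{(1)})\le\mu_{max}(E)$ and $\mu(E/E^{(s)})\ge\mu_{min}(E)$, so $\sum_{j=1}^{s}(\mu_{i_j}-\mu_{i_j+1})\le pI(E)$. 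Adding the two contributions yields the claimed bound.

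The only genuinely delicate point is in the non-invariant step: isolating the exact form of the [IPM] low-height statement and verifying that $p\ge r+n-(s+2)$ suffices uniformly over all subquotients $W_j/F_i$ that occur — and keeping in mind that, unlike in characteristic $0$, the inequality $\mu_{max}(U\otimes\Omega^1_X)\le\mu_{max}(U)+\mu_{max}(\Omega^1_X)$ fails in general and must be paid for by the hypothesis on $p$. The invariant step is pure bookkeeping with the descended filtration and the identity $\mu(F^*(\cdot))=p\,\mu(\cdot)$; it is there that the dichotomy $s=0$ versus $s\ge1$ produces $\epsilon=\min\{1,s\}$, with a single factor $pI(E)$ replacing Sun's $psI(E)$.
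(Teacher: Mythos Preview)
Your proof is correct and follows essentially the same route as the paper: the same telescoping decomposition of $I(F^*E)$ into $(l-s)$ non-invariant drops and $s$ invariant drops, the same use of the canonical connection together with the [IMP] low-height result to bound each non-invariant drop by $\mu_{max}(\Omega^1_X)$, and the same $\mu(F^*(\cdot))=p\,\mu(\cdot)$ bookkeeping to telescope the invariant drops to $pI(E)$. The only cosmetic differences are that you run the second-fundamental-form argument inside the block $W_j/V_j$ rather than in all of $F^*E/F_i$, and you package the [IMP] step as the single inequality $\mu_{max}(U\otimes\Omega^1_X)\le\mu_{max}(U)+\mu_{max}(\Omega^1_X)$ instead of explicitly filtering $\Omega^1_X$ through its HN filtration; both lead to the identical bound $\mu_i-\mu_{i+1}\le\mu_{max}(\Omega^1_X)$ under $p\ge r+n-(s+2)$.
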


\begin{proof} If $F^*E$ is semistable then it is obvious.
Suppose $F^*E$ is not semistable, then we have the Harder-Narasimhan  
filtration of $F^*E$,
$$0= F_0\subset F_1 \subset \cdots \subset F_l\subset F_{l+1} = F^*E,$$
in particular $l\geq 1$.
Let $ 0 \subset E_1 \subset \cdots \subset E_t \subset E$
be the HN filtration of $E$. 
Let 
$$ S = \{i\mid 1\leq i \leq l, \mbox{where}~~F_i 
~\mbox{descends to some}~~ E_{j_i}\}.$$

Now 
$$I(F^*E) = \sum_{i=1}^{l}\mu(F_i/F_{i-1})-\mu(F_{i+1}/F_i)$$
$$ = \sum_{i\notin S}\mu(F_i/F_{i-1})-\mu(F_{i+1}/F_i) +
 \sum_{i\in S}\mu(F_i/F_{i-1})-\mu(F_{i+1}/F_i)$$

\noindent{Case~(1)}. Suppose $i\notin S$. Then 
 we have a nonzero 
$\sO_X$-linear map
$$\sigma_i: F_i \rightarrow (F^*E/F_i)\tensor\Omega^1_X.$$
Let $$0=M_0\subset M_1\subset \cdots \subset M_{m+1} = \Omega^1_X$$
be the HN filtration of the cotangent bundle $\Omega^1_X$ of $X$.
Note that $F_i/F_{i-1}$, $F^*E/F_i$
and $M_i/M_{i-1}$ are locally free sheaves of $\sO_U$-modules, where $U$ 
is  an open subscheme such that $X\setminus U$ is of codimension 
$\geq 2$ in $X$.

Let $j$ be the minimum integer such that $\sigma_i(F_i) \subseteq \frac{F^*E}{F_i}\tensor M_j$.
This induces a nonzero $\sO_X$-linear map 
$$F_i \rightarrow (F^*E/ F_i)\tensor M_j/M_{j-1}.$$
Hence, by Lemma~\ref{l3},
\begin{equation}\label{e1}\mu_{min}(F_i)\leq \mu_{max}((F^*E/F_i)\tensor 
(M_j/M_{j-1})).\end{equation}
 We note that $\rank(F_{i+1}/F_{i}) \leq r-(s+1)$. Therefore, by [IMP], 
$\frac{F_{i+1}}{F_i}\tensor\frac{M_j}{M_{j-1}}$ is semistable, as 
$$p+1 \geq  r-(s+1) + n \geq \dim\left(\frac{F_{i+1}}{F_i}\right) + 
\dim\left(\frac{M_j}{M_{j-1}}\right).$$
 Hence   

$$\left(\frac{F_{i+1}}{F_i}\tensor\frac{M_j}{M_{j-1}}\right)\subset \cdots 
\subset
\left(\frac{F_{t}}{F_i}\tensor\frac{M_j}{M_{j-1}}\right)\subset 
\left(\frac{F_{t+1}}{F_i}\tensor\frac{M_j}{M_{j-1}}\right) = 
\left(\frac{F^*E}{F_i}\tensor\frac{M_j}{M_{j-1}}\right)$$

is the HN filtration of $\frac{F^*E}{F_i}\tensor\frac{M_j}{M_{j-1}}$.
Therefore 
$$\begin{array}{lcl}
\mu_{max}((F^*E/F_i)\tensor (M_j/M_{j-1})) & = & 
\mu(\frac{F_{i+1}}{F_i}\tensor\frac{M_j}{M_{j-1}})\\
& = &  \mu(\frac{F_{i+1}}{F_i}) + \mu(\frac{M_j}{M_{j-1}})\\
&  \leq & \mu(\frac{F_{i+1}}{F_i}) + \mu_{max}(\Omega^1_X)
\end{array}$$
Hence, equation~(\ref{e1}) implies that 
$$\mu(F_i/F_{i-1})-\mu(F_{i+1}/F_{i})\leq \mu_{max}(\Omega^1_X).$$
Therefore, for $i \notin S$,  
$$\sum_{i\notin S}\mu(F_i/F_{i-1})-\mu(F_{i+1}/F_i) \leq  
(l-s)~\mu_{max}(\Omega^1_X).$$

\noindent{Case~(2)}.\quad Let $F_{i_1},  \ldots, F_{i_s}$ be 
the subsheaves of the HN filtration of $F^*E$ which  
descend to subsheaves $E_{j_1}, \ldots, E_{j_s}$ of $E$,  where 
$E_{j_1}\subset \cdots \subset E_{j_s}$.

\noindent{Claim}.\quad Let $E_{j_0} = (0)$ and $E_{j_{s+1}} = E$, then we have
$$\mu\left(\displaystyle{\frac{F^*E_{j_k}}{F_{(i_k)-1}}}\right)
-\mu\left(\displaystyle{\frac{F_{(i_k)+1}}{F^*E_{j_k}}}\right)
\leq p\mu\left(\displaystyle{\frac{E_{j_k}}{F_{j_{(k-1)}}}}\right)
-p\mu\left(\displaystyle{\frac{E_{j_{(k+1)}}}{E_{(j_k)}}}\right)
$$
\noindent{Proof of the claim}:\quad 
We note that  $F^*E_{j_k}/F^*E_{j_{(k-1)}}$ has the following HN filtration 
$$ \frac{F^*E_{j_{(k-1)}}}{F^*E_{j_{(k-1)}}} \subset \cdots \subset 
\frac{F_{(i_k)-1}}{F^*E_{j_{(k-1)}}} \subset
 \frac{F^*E_{j_k}}{F^*E_{j_{(k-1)}}}.$$
Therefore 
$$\mu_{min}\left(\frac{F^*E_{j_k}}{F^*E_{j_{(k-1)}}}\right) = 
\mu\left(\displaystyle{\frac{F^*E_{j_k}}{F_{(i_k)-1}}}\right)
\leq 
\mu\left(\displaystyle{\frac{F^*E_{j_k}}{F^*E_{j_{(k-1)}}}}\right)
= p\mu\left(\displaystyle{\frac{E_{j_k}}{E_{j_{(k-1)}}}}\right),$$

where the second inequality follows because 
$\frac{F^*E_{j_k}}{F^*E_{j_{(k-1)}}}$ is a quotient of 
$\frac{F^*E_{j_k}}{F_{(i_k)-1}}$ as $E_{j_{(k-1)}} \subseteq F_{(i_k)-1}$. 
On the other hand 
$$\displaystyle{\frac{F_{(i_k)+1}}{F^*E_{j_k}}}
\subset \cdots \subset 
\displaystyle{\frac{F^*E_{j_{(k+1)}}}{F^*E_{j_k}}}
$$ is the  HN filtration of 
$\displaystyle{\frac{F^*E_{j_{(k+1)}}}{F^*E_{j_k}}}$. Therefore 
$$\mu_{max}\left(\displaystyle{\frac{F^*E_{j_{(k+1)}}}{F^*E_{j_k}}}\right)
= \mu\left(\displaystyle{\frac{F_{(i_k)+1}}{F^*E_{j_k}}}\right)
\geq \mu\left(\displaystyle{\frac{F^*E_{j_{(k+1)}}}{F^*E_{j_k}}}\right)
 = p\mu\left(\displaystyle{\frac{E_{j_{(k+1)}}}{E_{j_k}}}\right).$$
This proves the claim.

Now 
$$\displaystyle{\sum_{i\in S = \{i_1, i_2, \ldots, i_s\}}}
\mu\left(\displaystyle{\frac{F_i}{F_{i-1}}}\right)-
\mu\left(\displaystyle{\frac{F_{i+1}}{F_i}}\right)$$
$$\begin{array}{cl}
 \leq & 
p\left[\mu(E_{j_1})-\mu\left(\displaystyle{\frac{E_{j_2}}{E_{j_1}}}\right)
+\mu\left(\displaystyle{\frac{E_{j_2}}{E_{j_1}}}\right)
-\mu\left(\displaystyle{\frac{E_{j_3}}{E_{j_2}}}\right)+\cdots +
\mu\left(\displaystyle{\frac{E_{j_s}}{E_{j_{s-1}}}}\right)
-\mu\left(\displaystyle{\frac{E}{E_{j_s}}}\right)
 \right]\\
 = & p\left[\mu(E_{j_1})-\mu\left(\displaystyle{\frac{E}{E_{j_s}}}\right)\right]\\
 \leq & p\left[\mu_{max}(E)-\mu_{min}(E)\right] = p~I(E),\end{array}$$
for a proof of  the second last inequality one can 
see Proposition~3.3~(3) in [S]. 
Now the lemma follows at once by Case~(1) and Case~(2). \end{proof}

Now,  modifying  the proof of Theorem~3.12 of [S] with similar 
arguments, we get the 
following (here $l$ and $s$ are as in Theorem~3.12 of [S])
\begin{thm}\label{t5} Let $\dim~X = n$ and $\rank~E = r$ and let $\Omega_X^1$ 
denote the cotangent bundle of $X$. Then, for any $p>0$, we have 
$$L_{max}(E) -L_{min}(E) \leq \frac{l-s}{p}\cdot L_{max}(\Omega_X^1)+I(E),$$
where 
$$L_{max}(E) :=\mbox{lim}_{k\to\infty}\frac{\mu_{max}(F^{k*}E)}{p^k},~~~
L_{min}(E) :=\mbox{lim}_{k\to\infty}\frac{\mu_{min}(F^{k*}E)}{p^k}.$$
In particular, 
$$I(F^*E) \leq (r-1)L_{max}(\Omega_X^1) + I(E),~~~\mbox{if}~~ 
\mu_{max}(\Omega_X^1) > 0$$
otherwise 
$I(F^*E) = I(E)$.
\end{thm}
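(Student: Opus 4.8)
\noindent The plan is to follow the proof of [S,~Theorem~3.12], iterating the estimate of Lemma~\ref{l1} over the successive Frobenius twists $G_k:=F^{k*}E$. First I would record the elementary properties of the limits. Pulling back a maximal destabilising subsheaf of $G_k$ by $F$ shows $\mu_{max}(G_{k+1})\geq p\,\mu_{max}(G_k)$, so $\mu_{max}(G_k)/p^k$ is nondecreasing and, dually, $\mu_{min}(G_k)/p^k$ is nonincreasing; combined with the boundedness one extracts from iterating the instability bound, this gives the existence of $L_{max}(E)$ and $L_{min}(E)$, together with $L_{min}(E)\leq\mu_{min}(E)\leq\mu_{max}(E)\leq L_{max}(E)$, so $L_{max}(E)-L_{min}(E)\geq I(E)$, and $L_{max}(F^*E)=p\,L_{max}(E)$, $L_{min}(F^*E)=p\,L_{min}(E)$; the same facts for $\Omega^1_X$ give $\mu_{max}(\Omega^1_X)\leq L_{max}(\Omega^1_X)$.

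Next I would rerun the proof of Lemma~\ref{l1} with $G_k$ in place of $E$: the Harder--Narasimhan steps of $G_{k+1}=F^*G_k$ split into the $s_k$ steps that are $\triangledown_{can}$-invariant --- equivalently, that descend to $G_k$ --- and the remaining $l_k-s_k$ steps. For a non-descending step, Lemma~\ref{l0} produces a nonzero $\sO_X$-map $F_i\to(G_{k+1}/F_i)\tensor\Omega^1_X$, and the Case~(1) computation (Lemma~\ref{l3} and the tensor-slope estimate) bounds the slope gap $\mu(F_i/F_{i-1})-\mu(F_{i+1}/F_i)$ by $\mu_{max}(\Omega^1_X)$; the telescoping of Case~(2) bounds the total contribution of the descending steps to $I(G_{k+1})$ by $p\,I(G_k)$. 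This gives, for every $k\geq 0$,
\[
I(G_{k+1})\ \leq\ (l_k-s_k)\,\mu_{max}(\Omega^1_X)\ +\ p\,I(G_k).
\]

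Dividing by $p^{k+1}$, telescoping $\sum_{k\geq0}\bigl(I(G_{k+1})/p^{k+1}-I(G_k)/p^k\bigr)$, and letting $k\to\infty$ yields
\[
\bigl(L_{max}(E)-L_{min}(E)\bigr)-I(E)\ \leq\ \mu_{max}(\Omega^1_X)\sum_{k\geq0}\frac{l_k-s_k}{p^{k+1}}.
\]
Since the Harder--Narasimhan filtration of $F^{k*}E$ stabilises under Frobenius (its graded pieces eventually being strongly semistable), $l_k=s_k$ for $k$ large, so the sum is finite; writing $l-s:=\sum_{k\geq0}(l_k-s_k)$ --- the $l-s$ of [S,~Theorem~3.12], which satisfies $l-s\leq r-1$ --- and using $p^{-(k+1)}\leq p^{-1}$, the right-hand side is at most $\frac{l-s}{p}\mu_{max}(\Omega^1_X)\leq\frac{l-s}{p}L_{max}(\Omega^1_X)$, which is the main inequality. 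For the last assertions: since $I(F^*E)\leq L_{max}(F^*E)-L_{min}(F^*E)=p\bigl(L_{max}(E)-L_{min}(E)\bigr)$ and $l-s\leq r-1$, the bound $I(F^*E)\leq(r-1)L_{max}(\Omega^1_X)+I(E)$ follows (in the range of interest $I(E)=0$); and if $\mu_{max}(\Omega^1_X)\leq0$, the Case~(1) gap bound forces every $l_k-s_k$ to vanish, so $L_{max}(E)=\mu_{max}(E)$, $L_{min}(E)=\mu_{min}(E)$, whence $F^*E$ is semistable whenever $E$ is, i.e. $I(F^*E)=I(E)$.

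I expect the main difficulty to be twofold. The first is the combinatorial bookkeeping behind the identification of $\sum_k(l_k-s_k)/p^{k+1}$ with $\frac{l-s}{p}$ and the bound $l-s\leq r-1$: this needs a precise account of how each Harder--Narasimhan step of $F^{m*}E$ descends through the intermediate twists and when new steps are created, i.e. of the limiting (strong) Harder--Narasimhan filtration. The second is carrying out the Case~(1) tensor-slope estimate without the characteristic hypothesis of Lemma~\ref{l1}, so that the conclusion is valid for all $p>0$: this is exactly what passing to the $L$-invariants --- where $L_{max}(A\tensor B)=L_{max}(A)+L_{max}(B)$ holds with no restriction on $p$ --- is designed to absorb, and it is also why $\mu_{max}(\Omega^1_X)$ gets upgraded to $L_{max}(\Omega^1_X)$ in the final bound.
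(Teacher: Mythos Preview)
The paper gives no proof beyond ``modifying the proof of Theorem~3.12 of [S] with similar arguments,'' so the comparison is really with Sun's argument. Your plan to iterate the Case~(1)/Case~(2) split over $G_k=F^{k*}E$ is natural, but the key step you write down,
\[
I(G_{k+1})\ \leq\ (l_k-s_k)\,\mu_{max}(\Omega^1_X)\ +\ p\,I(G_k),
\]
is exactly the conclusion of Lemma~\ref{l1}, and that lemma uses [IMP] to prove semistability of $(F_{i+1}/F_i)\tensor(M_j/M_{j-1})$, which needs the hypothesis $p\geq r+n-(s+2)$. For arbitrary $p$ this inequality is simply not available, and your later remark that ``passing to the $L$-invariants is designed to absorb'' the missing hypothesis does not repair it: you only invoke $L$-invariants at the very end, via $\mu_{max}(\Omega^1_X)\leq L_{max}(\Omega^1_X)$, after the unjustified per-step bound has already been used. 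If instead you try to insert $L$-invariants into the per-step estimate, the nonzero map $F_i\to (G_{k+1}/F_i)\tensor\Omega^1_X$ yields
\[
\mu(F_i/F_{i-1})=\mu_{min}(F_i)\leq L_{max}\bigl((G_{k+1}/F_i)\tensor\Omega^1_X\bigr)=L_{max}(G_{k+1}/F_i)+L_{max}(\Omega^1_X),
\]
but $L_{max}(G_{k+1}/F_i)\geq \mu(F_{i+1}/F_i)$, which is the wrong direction for the telescoping you want. So the iteration scheme, as written, does not close.

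The argument in [S] that the paper is pointing to is not an iteration of the $\mu$-inequality but a single pass over the HN filtration of $F^*E$ carried out entirely in the $L$-invariants: one telescopes quantities of the form $L_{min}(F_i/F_{i-1})-L_{max}(F_{i+1}/F_i)$ and uses $L_{max}(A\tensor B)=L_{max}(A)+L_{max}(B)$ (valid for all $p$) in Case~(1), together with $L_{max}(F^*E)=pL_{max}(E)$, $L_{min}(F^*E)=pL_{min}(E)$, to arrive directly at the bound for $L_{max}(E)-L_{min}(E)$. In particular $l$ and $s$ here are the same $l=l(F^*E)$ and $s=s(X,E)$ as in Lemma~\ref{l1}, attached to a \emph{single} Frobenius pullback, not the aggregate $\sum_k(l_k-s_k)$ you introduce; the latter would make the constant depend on the whole tower and would not match the statement. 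Your derivation of the ``in particular'' clause also inherits a factor $p$ in front of $I(E)$ that the statement does not have; this discrepancy is another symptom that the intended route is not iteration-then-limit but a direct $L$-invariant computation on $F^*E$.
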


The following corollary is a generalization of a Corollary~$2^p$ of 
[SB] (there it is proved for $\dim~X =1$ and for every prime $p$).

\vspace{5pt}

\begin{cor}\label{c1}With the notation as in Lemma~\ref{l1}, if in addition
$E$ is semistable then 
$$I(F^*E) \leq (r-1)\mu_{max}(\Omega^1_X).$$\end{cor}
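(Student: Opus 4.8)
The plan is to specialize Lemma~\ref{l1} to semistable $E$, where both correction terms in its bound collapse. First I would observe that $I(E) = \mu_{max}(E) - \mu_{min}(E) = 0$ because $E$ is semistable, so the term $\epsilon\cdot pI(E)$ in Lemma~\ref{l1} vanishes no matter what $\epsilon$ is. The one point requiring a genuine input is the claim that $s = s(X,E) = 0$. If $F^*E$ is semistable this is immediate, since then its HN filtration has no proper nontrivial terms; so assume $F^*E$ is not semistable, with HN filtration $0 = F_0\subset F_1\subset\cdots\subset F_l\subset F_{l+1} = F^*E$. For any $\triangledown_{can}$-invariant subsheaf $F_i$ (i.e. one with $\triangledown_{can}(F_i)\subseteq F_i\tensor\Omega^1_X$) the composite $F_i\hookrightarrow F^*E\by{\triangledown_{can}}F^*E\tensor\Omega^1_X\to (F^*E/F_i)\tensor\Omega^1_X$ is identically zero; but Lemma~\ref{l0} asserts exactly that, for semistable $E$ with $F^*E$ not semistable, every one of these induced maps is nonzero. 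Hence no $F_i$ is $\triangledown_{can}$-invariant, so by Notation~\ref{n1} none of them descends to a subsheaf of $E$, giving $s = 0$ and $\epsilon = \min\{1,s\} = 0$.

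With $s = 0$ the hypothesis $p\geq r+n-(s+2)$ of Lemma~\ref{l1} becomes $p\geq r+n-2$, which is the standing assumption, so Lemma~\ref{l1} applies and gives
$$I(F^*E)\leq (l-s)\,\mu_{max}(\Omega^1_X) + \epsilon\cdot pI(E) = l\,\mu_{max}(\Omega^1_X),$$
where $l = l(F^*E)$. It then remains only to bound $l$: the HN filtration of $F^*E$ has $l+1$ successive quotients, each a nonzero torsion free sheaf of rank at least $1$, while $\rank F^*E = r$, so $l+1\leq r$, that is $l\leq r-1$. When $\mu_{max}(\Omega^1_X)\geq 0$ this yields $I(F^*E)\leq l\,\mu_{max}(\Omega^1_X)\leq (r-1)\,\mu_{max}(\Omega^1_X)$, which is the assertion.

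The only delicate step — and the natural place for the argument to need care — is the identification $s = 0$, which is where Lemma~\ref{l0}, the sole nontrivial input, is used; everything else is bookkeeping around Lemma~\ref{l1} and the trivial rank count $l\leq r-1$. For completeness one should also record the regime $\mu_{max}(\Omega^1_X) < 0$: running Case~(1) of the proof of Lemma~\ref{l1} with $S = \emptyset$ (legitimate since $s = 0$) shows $\mu(F_i/F_{i-1}) - \mu(F_{i+1}/F_i)\leq \mu_{max}(\Omega^1_X) < 0$ for every $i$ with $1\leq i\leq l$, which contradicts the strict decrease of the HN slopes unless $l = 0$; hence $F^*E$ is semistable and $I(F^*E) = 0$ in that case.
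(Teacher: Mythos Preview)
Your proof is correct and follows exactly the route the paper intends: the corollary is stated without proof because it is the immediate specialization of Lemma~\ref{l1} to semistable $E$, using $I(E)=0$, $s=0$ (via Lemma~\ref{l0}), and the trivial bound $l\leq r-1$. Your added remark on the case $\mu_{max}(\Omega^1_X)<0$ is a reasonable clarification, though in the paper this regime is handled separately (see Theorem~\ref{t5}) and the corollary is implicitly meant for $\mu_{max}(\Omega^1_X)\geq 0$.
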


\noindent{\bf Remark}.\quad In the case $\dim~X = 1$, the bundle
 $M_j/M_{j-1} = \Omega^1_X$ is  a line bundle and therefore 
$\frac{F_{i+1}}{F_i}\tensor\frac{M_j}{M_{j-1}}$ is semistable, for every
 prime $p$.
 In particular, we have the following 

\vspace{5pt}

 \begin{proposition}\label{p5}\quad{\it Let $X$ be a smooth 
projective curve of 
genus $g\geq 1$ and $E$ be a torsion free sheaf on $X$. 
Then, for any prime $p$, we have  
$$I(F^*E) \leq (l-s)(2g-2) + \epsilon\cdot pI(E),$$}
 where $\epsilon = \mbox{min}\{1,s\}$.
\end{proposition}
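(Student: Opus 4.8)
The plan is to observe that the statement is exactly Lemma~\ref{l1} specialised to $\dim X=1$, and that the only place in the proof of Lemma~\ref{l1} where the hypothesis $p\geq r+n-(s+2)$ was used — the appeal to [IMP] to guarantee that $\frac{F_{i+1}}{F_i}\tensor\frac{M_j}{M_{j-1}}$ is semistable — disappears on a curve. Indeed, there $\Omega^1_X$ is a line bundle, so its HN filtration is trivial, $M_j/M_{j-1}=\Omega^1_X$, and the tensor product $\frac{F_{i+1}}{F_i}\tensor\Omega^1_X$ is automatically semistable, because twisting a semistable sheaf by a line bundle preserves semistability in every characteristic. So I would simply rerun the proof of Lemma~\ref{l1} with $n=1$, substituting this elementary fact for the representation-theoretic input; note also that on a smooth curve every torsion-free sheaf is locally free, so the open subscheme $U$ with $\codim(X\setminus U)\geq 2$ appearing there may be taken to be $X$ itself.

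Concretely: if $F^*E$ is semistable there is nothing to prove, since $2g-2\geq 0$ and $I(E)\geq 0$. Otherwise write the HN filtration $0=F_0\subset F_1\subset\cdots\subset F_l\subset F_{l+1}=F^*E$, write the HN filtration of $E$, put $S=\{i : 1\le i\le l,\ F_i\text{ descends to a subsheaf of }E\}$ (so $|S|=s$, with $l=l(F^*E)$ and $s=s(X,E)$), and split
$$I(F^*E)=\sum_{i\notin S}\Bigl(\mu(F_i/F_{i-1})-\mu(F_{i+1}/F_i)\Bigr)+\sum_{i\in S}\Bigl(\mu(F_i/F_{i-1})-\mu(F_{i+1}/F_i)\Bigr).$$
For $i\notin S$ the subsheaf $F_i$ is not $\triangledown_{can}$-invariant, so the canonical connection $\triangledown_{can}$ on $F^*E$ (which exists for any $E$) induces a nonzero $\sO_X$-linear map $\sigma_i:F_i\to (F^*E/F_i)\tensor\Omega^1_X$; by Lemma~\ref{l3}, $\mu_{min}(F_i)\leq\mu_{max}\bigl((F^*E/F_i)\tensor\Omega^1_X\bigr)$. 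Since $\Omega^1_X$ is a line bundle, the HN filtration of $(F^*E/F_i)\tensor\Omega^1_X$ is the HN filtration of $F^*E/F_i$ twisted by $\Omega^1_X$, so its $\mu_{max}$ equals $\mu(F_{i+1}/F_i)+\deg\Omega^1_X=\mu(F_{i+1}/F_i)+(2g-2)$, while $\mu_{min}(F_i)=\mu(F_i/F_{i-1})$. Hence $\mu(F_i/F_{i-1})-\mu(F_{i+1}/F_i)\leq 2g-2$, and as there are at most $l-s$ indices $i\notin S$, the first sum is $\leq (l-s)(2g-2)$.

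The second sum is nonempty only when $s\geq 1$, in which case $\epsilon=1$; for it I would repeat Case~(2) of the proof of Lemma~\ref{l1} verbatim, since nothing there used the characteristic hypothesis. Namely, using the descent subsheaves $F^*E_{j_1}\subset\cdots\subset F^*E_{j_s}$ and the HN filtrations of the successive quotients $F^*E_{j_k}/F^*E_{j_{k-1}}$ one gets $\sum_{i\in S}(\mu(F_i/F_{i-1})-\mu(F_{i+1}/F_i))\le p[\mu(E_{j_1})-\mu(E/E_{j_s})]\leq p\,I(E)=\epsilon\,p\,I(E)$; when $s=0$ this sum is empty and $\epsilon=0$. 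Adding the two estimates gives the claimed bound. There is no genuine obstacle: the whole content of the proposition is that on curves the lower bound on $p$ in Lemma~\ref{l1} can be dropped, and the one point to verify is the replacement of the [IMP] ingredient by the fact that tensoring by a line bundle preserves semistability.
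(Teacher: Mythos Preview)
Your proposal is correct and matches the paper's own argument essentially verbatim: the paper proves the proposition by the one-line remark immediately preceding it, namely that when $\dim X=1$ the quotient $M_j/M_{j-1}=\Omega^1_X$ is a line bundle, so $\frac{F_{i+1}}{F_i}\tensor\frac{M_j}{M_{j-1}}$ is automatically semistable for every prime $p$, whence the proof of Lemma~\ref{l1} goes through without the characteristic hypothesis and with $\mu_{max}(\Omega^1_X)=2g-2$. You have identified exactly this and spelled out the details faithfully.
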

\vspace{5pt}

This  gives a sharper bound than given in Theorem~3.4 of [S].   
In fact the following examples show that perhaps this is the 
 optimal bound on the instability degree of $F^*E$: First we recall
 some results from [M] and [T1].
 Let $X$ be a nonsingular plane curve of degree $d$. 
Therefore $X= \mbox{Proj}~R$, where 
$R= k[x,y,z]/(h)$, with $h$  a homogeneous polynomial of degree $d$ 
and $k$ is an algebraically closed field of characteristic $p$.
Consider the canonical map 
$$0\longrightarrow V\longrightarrow H^0(X, \sL)\tensor\sO_X \longrightarrow
 \sL\longrightarrow 0,$$
where $\sL$ is the very ample bundle induced by $X\hookrightarrow {\bf P}^2$.
Then, by Corollary~5.4 of [T1], the Hilbert-Kunz multiplicity of $(X, \sL)$ 
(which is same as the Hilbert-Kunz multiplicity of $R$ with 
respect of the ideal $(x,y, z)$) is given as  
$$e_{HK}(X,\sL) = \frac{3d}{4} + \frac{l_1^2}{4dp^{2s_1}},$$ 
where 
$s_1\geq 1$ such that 
$F^{(s_1-1)*}V$ is semistable and $F^{s_1*}V$ is not semistable. and $l_1$ is
 an integer such that $l_1\equiv pd~(mod~2)$ and $0\leq l_1 \leq d(d-3)$.
Moreover, by the proof of Theorem~5.3 of [T1], we have an interpretation of
  $l_1$ as
 follows: Let $\sL_1 \subset F^{s_1*}V$ be the HN filtration of $F^{s_1*}V$
 then 
$\deg~\sL_1 = -(d/2)p^{s_1}+(l_1/2)$. In particular  
$I(F^{s_1*}V) = l_1$.

On the other hand, Monsky in [M], using a theorem of C.Han calculated Hilbert-Kunz 
multiplicity of various  irreducible trinomial
 plane curves. Here we recall two of those examples.
Let $R = k[x,y,z]/(h)$, where 
\begin{enumerate}
\item $h = x^{d-1}y+y^{d-1}z+z^{d-1}x$
 and $d\geq 4$  is an even integer and $p$ is a prime number 
such that $p\equiv\pm(d-1)~(mod~2(d^2-3d+3))$,
\item $h = x^d+y^d+z^d$ and $d$ is an even integer and $p$ is a prime number 
such that $p\equiv d\pm~1(mod~2d)$
\end{enumerate}
(note that, for any given $d$, there are infinitely many primes satisfying 
conditions in (1) and (2))
then $X=\mbox{Proj}~R$ is a nonsingular projective plane curve with  
$$e_{HK}(X, \sL) = \displaystyle{\frac{3d}{4} + \frac{(d(d-3))^2}{4dp^2}}.$$ 
Now,  Corollary~5.4 of [T1], stated above, implies that
  in these two examples  $s_1 = 1$ and $l_1 = d(d-3)$.
  
Therefore, by Theorem~5.3 of [T1], we have
$I(F^*V) = d(d-3) = 2g-2$ and  $l= l(X,F^*V)=1$ and 
$s= s(X, E) = 0$. In particular 
$$I(F^*V) = (l-s)(2g-2)+ pI(V),$$ for 
infinitely many primes. On the other hand, in example~($2$) above 
(see [HM]), for $d = 4$,
  $I(F^*E) = 0$ for infinitely many primes.

\vspace{5pt}

\noindent{\bf Remark}.\quad  Let $X$ be a nonsingular projective variety of 
dimension $n$, 
over a field $k$.  Let  
$i:X\hookrightarrow {\bf P}^{n_0}$ be an closed embedding (we can always take 
$n_0 = 2\dim~X+1$). 
This gives a surjective 
map of sheaves of $\sO_X$-modules 
$$\Omega^1_{{{\bf P}^{n_0}_k}/k}\tensor\sO_X \longrightarrow 
\Omega^1_{X/k}.$$
Let $M_1$ be a subsheaf of $\Omega^1_X$  such that $\mu_{max}(\Omega^1_X) = 
\mu(M_1)$.
Then the following composite map of sheaves of $\sO_X$-modules
$$ \Omega^1_{{{\bf P}^{n_0}_k}/k}\tensor\sO_X(2) \longrightarrow 
\Omega^1_{X/k}(2) \longrightarrow (\Omega^1_{X/k}/M_1)(2) $$
is surjective. In particular, the sheaf  $(\Omega^1_{X/k}/M_1)(2)$ is generated
 by global sections as $\Omega^1_{{{\bf P}^{n_0}_k}/k}(2)$ is so. 
Therefore $\deg(\Omega^1_{X/k}/M_1)(2)\geq 0$. In particular 
$\deg(M_1)(2) \leq \deg(\Omega^1_X)(2)$, which gives the inequality  
$$\mu(M_1) \leq \frac{n}{\rank(M_1)}\mu(\Omega^1_X)+
\deg(\sO_X(2))\frac{n-\rank(M_1)}{\rank(M_1)},$$
Note that $1\leq \rank(M_1) \leq n$, and the other invariants on the right hand 
side of the inequality  depend on $\deg~X$ 
(with respect to the embedding $i$) and invariants of $X$, namely 
$\mu(\Omega^1_X)$ and $\dim~X$.

\vspace{5pt}

\noindent{\bf Remark}.\quad 
We recall Lemma~1.5 of [T2] with a small modification.

\begin{lemma}\label{l2} Let $E$ be a  torsion free coherent sheaf on 
$X$, where $\dim~X \geq 1$. Let $r = \rank~E$.
Suppose $E$ is not semistable. Then, for the  HN filtration 
(\ref{e11}) of $E$, we have 
$$\mu_i(E)-\mu_{i+1}(E) \geq \frac{4}{r^2}, ~~\mbox{for every}~~1\leq i 
\leq l.$$
 \end{lemma}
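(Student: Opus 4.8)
The plan is to reduce the general statement to a purely numerical fact about the slopes $\mu_i(E)$, which are rational numbers with controlled denominators. First I would recall that for the HN filtration (\ref{e11}) of $E$, each graded piece $E_i/E_{i-1}$ is a torsion free semistable sheaf of some rank $r_i$ with $1 \le r_i \le r$ and $\sum r_i = r$, and that $\mu_i(E) = (c_1(E_i/E_{i-1}) \cdot H^{n-1})/r_i$. The numerator $c_1(E_i/E_{i-1}) \cdot H^{n-1}$ is an integer (an intersection number on the smooth projective $X$), so $\mu_i(E) = a_i/r_i$ with $a_i \in \Z$. The key point of the HN filtration is the strict inequality $\mu_i(E) > \mu_{i+1}(E)$, i.e. $a_i/r_i > a_{i+1}/r_{i+1}$, equivalently $a_i r_{i+1} - a_{i+1} r_i \ge 1$ since both sides are integers.

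From here the estimate is elementary:
$$\mu_i(E) - \mu_{i+1}(E) = \frac{a_i r_{i+1} - a_{i+1} r_i}{r_i r_{i+1}} \ge \frac{1}{r_i r_{i+1}}.$$
Now I would bound the denominator. Since $r_i, r_{i+1} \ge 1$ and $r_i + r_{i+1} \le r$, the product $r_i r_{i+1}$ is maximized (for real arguments with fixed sum $\le r$) when both equal their largest admissible value; by the AM–GM bound $r_i r_{i+1} \le (r_i + r_{i+1})^2/4 \le r^2/4$. Hence $\mu_i(E) - \mu_{i+1}(E) \ge 4/r^2$, which is exactly the claimed bound, valid for every $1 \le i \le l$.

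The only subtlety — and the step I would be most careful about — is the integrality of the numerator $c_1(E_i/E_{i-1}) \cdot H^{n-1}$ when $E$ is merely torsion free rather than locally free. This is fine: $c_1$ of a coherent sheaf is well defined in $\Pic X$ (or one restricts to the locus $U$ where everything is locally free, $\codim(X \setminus U) \ge 2$, so that $H^{n-1}$-intersection numbers are unaffected), and $H$ is a genuine line bundle, so the intersection number is an integer. The "small modification" alluded to in the preceding remark is presumably precisely the passage from the curve case of [T2, Lemma~1.5] to arbitrary $\dim X \ge 1$ via this $H^{n-1}$-intersection-number formulation; no new geometric input beyond Definition~\ref{d1} and Definition~\ref{d2} is needed. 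Everything else is the one-line arithmetic above, so I would not expect any real obstacle.
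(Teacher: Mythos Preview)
Your argument is correct and is essentially identical to the paper's own proof: the paper also writes $\mu_i(E)-\mu_{i+1}(E)=d_i/r_i-d_{i+1}/r_{i+1}\ge 1/(r_ir_{i+1})\ge 4/(r_i+r_{i+1})^2\ge 4/r^2$, using integrality of the degrees and the AM--GM bound exactly as you do. Your extra care about why $c_1(E_i/E_{i-1})\cdot H^{n-1}$ is an integer in the torsion free higher-dimensional setting is a point the paper leaves implicit.
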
 
\begin{proof}Let 
$$0 = E_0 \subset E_1\subset \cdots \subset E_l
 \subset E_{l+1} = E$$
be the HN filtration of $E$. Let $d_i = \deg(E_i/E_{i-1})$ 
and $r_i = \rank(E_i/E_{i-1})$. Then 
$$\mu_i(E)-\mu_{i+1}(E) = \frac{d_i}{r_i}-\frac{d_{i+1}}{r_{i+1}}
\geq \frac{1}{r_ir_{i+1}} \geq \frac{4}{(r_i+r_{i+1})^2}\geq \frac{4}{r^2},$$
since $r_i+r_{i+1}\leq r$.
This proves the lemma.
\end{proof}

The 
following theorem is a generalization of  Lemma~1.8 
in [T2], where it is proved for $\dim~X = 1$.

\begin{thm}\label{t1} Let $p \geq \max\{(r+n-2),~~\mu_{max}(\Omega^1_X) 
(r^3/4)\}$, where 
$E$ is a torsion free coherent sheaf over $X$.  
Let 
$$0 \subset E_1\subset E_2 \subset \cdots  \subset E_l \subset E$$
 be the 
HN 
filtration of $E$.
Then 
$$F^*E_1\subset F^*E_2 \subset \cdots  \subset F^*E_l \subset F^*E $$
is a subfiltration of the HN filtration  of $F^*E$, {\it i.e.}, if 
$$0\subset {\tilde E_1}\subset {\tilde E_2} \subset \cdots  \subset 
{\tilde E_{l_1}} \subset {\tilde E_{l_1+1}} = F^*E $$
is the HN filtration of $F^*E$ then, for every $1\leq i\leq l$ there 
exists $1\leq j_1\leq l_1$ such that $F^*E_i = {\tilde E_{j_i}}$. 
\end{thm}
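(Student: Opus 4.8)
The plan is to show that each $F^*E_i$ is a term of the HN filtration of $F^*E$ by proving two things: that $F^*E_i$ is $\triangledown_{can}$-invariant (hence descends, which it manifestly does since it equals $F^*E_i$), and — the real content — that $F^*E_i$ is \emph{semistable} and that the quotient $F^*E/F^*E_i$ has $\mu_{min}(F^*E_i)>\mu_{max}(F^*E/F^*E_i)$, so that $F^*E_i$ sits as an isolated ``jump'' of the HN filtration of $F^*E$. I would proceed by induction on the graded pieces: for each $i$, the sheaf $G_i:=F^*E_i/F^*E_{i-1}=F^*(E_i/E_{i-1})$ is the Frobenius pull-back of a semistable torsion free sheaf of rank $r_i\le r$, so by Corollary~\ref{c1} its instability degree satisfies $I(G_i)\le (r_i-1)\mu_{max}(\Omega^1_X)$, and in particular $\mu_{max}(G_i)-\mu_{min}(G_i)\le (r-1)\mu_{max}(\Omega^1_X)$.

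Next I would compare these bounds with the gaps between consecutive HN slopes of $E$. For consecutive graded pieces of $E$, Lemma~\ref{l2} gives $\mu_i(E)-\mu_{i+1}(E)\ge 4/r^2$; pulling back by $F$ multiplies slopes by $p$, so the ``separation'' between the slope-ranges occupied by $G_i$ and $G_{i+1}$ inside $F^*E$ is at least $p\cdot\frac{4}{r^2} - (r-1)\mu_{max}(\Omega^1_X) - (r-1)\mu_{max}(\Omega^1_X)$, roughly. Here the hypothesis $p\ge \mu_{max}(\Omega^1_X)(r^3/4)$ is exactly what makes this quantity positive: one checks $p\cdot \frac{4}{r^2} > 2(r-1)\mu_{max}(\Omega^1_X)$ whenever $p> \frac{r^2(r-1)}{2}\mu_{max}(\Omega^1_X)$, which is implied by $p\ge (r^3/4)\mu_{max}(\Omega^1_X)$ for $r\ge 2$ (the case $r=1$ being trivial). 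Thus $\mu_{min}(G_i)>\mu_{max}(G_{i+1})$ for all $i$, which by Lemma~\ref{l3} shows there is no nonzero map between the graded pieces in the wrong direction. Concatenating the HN filtrations of the $G_i$'s (each refined inside its own slice) then gives a filtration of $F^*E$ with strictly decreasing slopes, i.e.\ it \emph{is} the HN filtration of $F^*E$; and since $F^*E_i$ is one of the terms where we pass from the $G_i$-block to the $G_{i+1}$-block, it appears as some ${\tilde E}_{j_i}$.

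More precisely, the key estimate I would isolate is: $\mu_{min}(F^*E_i) \ge p\,\mu_i(E) - (r-1)\mu_{max}(\Omega^1_X)$ and $\mu_{max}(F^*E/F^*E_i) \le p\,\mu_{i+1}(E) + (r-1)\mu_{max}(\Omega^1_X)$, using that $F^*E_i$ has a filtration with graded pieces $G_1,\dots,G_i$ of slopes $p\mu_1(E),\dots,p\mu_i(E)$ each of instability degree at most $(r-1)\mu_{max}(\Omega^1_X)$, and similarly for the quotient. Subtracting and using Lemma~\ref{l2}, $\mu_{min}(F^*E_i) - \mu_{max}(F^*E/F^*E_i) \ge p(\mu_i(E)-\mu_{i+1}(E)) - 2(r-1)\mu_{max}(\Omega^1_X) \ge \frac{4p}{r^2} - 2(r-1)\mu_{max}(\Omega^1_X) > 0$ by the hypothesis on $p$. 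By Lemma~\ref{l3} this forces any subsheaf of $F^*E$ to respect the splitting at $F^*E_i$, so $F^*E_i$ is a term of the HN filtration.

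The main obstacle I anticipate is the first reduction — knowing that $F^*E_i$ has a \emph{known} bound on its instability degree. This requires applying Corollary~\ref{c1} to $E_i$ (or to each graded piece $E_i/E_{i-1}$), which is legitimate since those are semistable of rank $\le r$, but one must be careful that the hypotheses of Lemma~\ref{l1}/Corollary~\ref{c1} (namely $p\ge r+n-2$, which is part of our hypothesis) apply with the \emph{same} $r$ and $n$; since rank can only drop and $n$ is fixed, this is fine. A secondary subtlety is that $F^*$ is exact and commutes with quotients only up to the torsion-free issue on the locus of codimension $\ge 2$ where everything is locally free — but slopes are computed there anyway, as already used throughout Lemma~\ref{l1}, so this causes no genuine difficulty. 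Finally, one should note $\mu_{max}(\Omega^1_X)$ could be $\le 0$, in which case the bounds are even easier and the argument only simplifies.
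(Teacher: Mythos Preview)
Your approach is essentially the one the paper intends (it cites Corollary~\ref{c1} and the proof of Lemma~1.8 in [T2], which proceeds exactly by bounding $I(F^*(E_i/E_{i-1}))$ via Corollary~\ref{c1}, comparing with the slope gap from Lemma~\ref{l2}, and concluding $\mu_{min}(G_i)>\mu_{max}(G_{i+1})$). However, there is a genuine numerical gap in your write-up.

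You bound $I(G_i)\le (r-1)\mu_{max}(\Omega^1_X)$ and $I(G_{i+1})\le (r-1)\mu_{max}(\Omega^1_X)$, obtaining
\[
\mu_{min}(G_i)-\mu_{max}(G_{i+1})\ \ge\ \frac{4p}{r^2}-2(r-1)\mu_{max}(\Omega^1_X),
\]
and then assert that $p\ge (r^3/4)\mu_{max}(\Omega^1_X)$ implies $p>\tfrac{r^2(r-1)}{2}\mu_{max}(\Omega^1_X)$. This is false for $r\ge 3$: e.g.\ for $r=3$ one has $r^3/4=6.75<9=r^2(r-1)/2$. So as written your inequality does not close.

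The fix is to keep the sharper bound that Corollary~\ref{c1} actually gives: $I(G_i)\le (r_i-1)\mu_{max}(\Omega^1_X)$ with $r_i=\rank(E_i/E_{i-1})$. Then
\[
\mu_{min}(G_i)-\mu_{max}(G_{i+1})\ \ge\ p\bigl(\mu_i(E)-\mu_{i+1}(E)\bigr)-(r_i+r_{i+1}-2)\,\mu_{max}(\Omega^1_X),
\]
and since $r_i+r_{i+1}\le r$ and $\mu_i(E)-\mu_{i+1}(E)\ge 4/r^2$ by Lemma~\ref{l2}, the right-hand side is at least $\frac{4p}{r^2}-(r-2)\mu_{max}(\Omega^1_X)$. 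Now $p\ge (r^3/4)\mu_{max}(\Omega^1_X)$ gives $\frac{4p}{r^2}\ge r\,\mu_{max}(\Omega^1_X)>(r-2)\mu_{max}(\Omega^1_X)$, and the argument goes through. (When $\mu_{max}(\Omega^1_X)\le 0$, Corollary~\ref{c1} forces each $G_i$ to be semistable and the conclusion is immediate, as you noted.)

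One further minor point: in your opening paragraph you say you will show ``$F^*E_i$ is semistable''. You do not need this and it is generally false; what you actually prove (and all that is required) is $\mu_{min}(F^*E_i)>\mu_{max}(F^*E/F^*E_i)$, which suffices to conclude that the concatenation of the HN filtrations of the $G_j$'s is the HN filtration of $F^*E$.
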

\begin{proof}Due to Corollary~\ref{c1}, the arguments, as given in the 
proof of Lemma~1.8 in 
[T2], can be adapted directly to the higher dimensional variety $X$. Hence 
the theorem follows.\end{proof}

However Theorem~\ref{t1}, above, cannot be generalised to arbitrarily small 
prime characteristics $p>0$. For 
this  one constructs the  following counterexamples from
  examples due to Raynaud~[R] and Monsky~[M].

\begin{ex}\label{ex}Let $X$ be a nonsingular projective curve of genus $ g 
= pk+1$ 
defined over an algebraically closed  field of char~$p > 2$, where $k$ is 
any 
positive integer. 
Consider the canonical map of locally free sheaves of $\sO_X$-modules
$$0\rightarrow \sO_X\rightarrow F_*\sO_X \rightarrow B\rightarrow 0,$$
then, by Theorem~4.1.1 of [R], the vector bundle $B$ is semistable of 
rank~$p-1$ and $\mu(B) = g-1$.

Moreover, by Remark~4.1.2 of [R], the Frobenius pull back $F^*B$ has the 
HN filtration given as follows: 
\begin{equation}\label{er}
 0=B_p \subset B_{p-1} \subset B_{p-2} \subset 
\cdots \subset B_2 
\subset  
B_1 = F^*B,\end{equation}
where $B_i/B_{i+1} \simeq {\Omega_X^{\tensor{i}}}$, for all $1\leq i \leq 
p-1$.
Now we take a line bundle  $L$ on $X$ of 
degree $ d = 2k(p-1)$. 
Let $V = L \oplus B $ then  
$ 0\subset L \subset 
V$ is the HN filtration
of $V$,  as 
$$\mu(L) = 2k(p-1) > pk = g-1 = \mu(B) = \mu(V/L)$$ 
and $L$ and $V/L \simeq B $ are semistable 
vector bundles. On the other hand, one can check that the filtration  
$$ 0\subset V_{p-1} = F^*L 
\oplus B_{p-1}\subset V_{p-2} = F^*L\oplus B_{p-2} \subset \cdots \subset
V_1 = F^*L\oplus B_1 = F^*V $$ is the 
HN filtration  of $F^*V$. 
\end{ex}

\begin{ex}\label{exx}Now we come back to Monsky's example of trinomial 
curves $h=x^d+y^d+z^d$ (see the discussion following
 Proposition~\ref{p5} above),
 with conditions on $d$ and $p$ as before.
Here, for the syzygy bundle $V$, the HN filtration of $F^*V$ is given by 
$0\subset\sL_1\subset F^*V$, where 
$$\deg~\sL_1 = -\frac{dp}{2}+\frac{d(d-3)}{2}~~~\mbox{and}~~
\deg~F^*V = -\frac{dp}{2}.$$
We choose a line bundle $\sL_0$ such that $\deg~\sL_0 = \frac{d}{2}+\delta $,
 where 
$\delta$ is an integer such that $1\leq \delta\cdot p \leq (d(d-3))/2$,
{\it e.g.}, if $p\geq 7$ a prime number then $d=p+1$ and $\delta = 1$ 
will satisfy all these conditions.
Let $W = \sL_0\oplus V$, then it is easy to check that 
 $0\subset \sL_0\subset W$ is the HN filtration of the vector bundle $W$ and 
$$0\subset \sL_1\subset \sL_1\oplus F^*\sL_0 \subset F^*V\oplus F^*\sL_0 
= F^*W$$ 
is the HN filtration of $F^*W$.
Therefore  HN filtration of $F^*W$ is not a refinement of the pull 
back of HN filtration of $W$. 
 
In particular the statement of Theorem~\ref{t1} is not true in general 
even for curves, for smaller
(compared to the genus of the curve or rank of the vector bundle)
characteristics. 
\end{ex}

\begin{cor}\label{cc1} Let $p \geq \max\{(r+n-2),~~\mu_{max}(\Omega^1_X) 
\frac{r^3}{4}\}$, where 
$E$ is a torsion free sheaf, of rank $r$, over $X$. Suppose
 $E$ is not semistable. 
Let 
$$0 \subset E_1\subset E_2 \subset \cdots  \subset E_l \subset E$$
 be the 
HN 
filtration of $E$.
Then $$I(F^*E) \leq \mu_{max}(\Omega^1_X)(
\rank\left(\frac{E}{E_l}\right)+\rank(E_1)-2) + pI(E).$$

In particular, if $E$ has HN filtration such that $\rank(E_1) = \rank(E/E_l)
= 1$ then, for every $s\geq 1$, we have 
$$I(F^{s*}E) = p^sI(E).$$  
\end{cor}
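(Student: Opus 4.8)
The plan is to estimate $\mu_{max}(F^*E)$ and $\mu_{min}(F^*E)$ one at a time. Since $E$ is not semistable and $p\ge\max\{r+n-2,\ \mu_{max}(\Omega^1_X)r^3/4\}$, Theorem~\ref{t1} applies: writing $0\subset\tilde E_1\subset\cdots\subset\tilde E_{l_1}\subset\tilde E_{l_1+1}=F^*E$ for the HN filtration of $F^*E$, there are indices $1\le j_1\le j_l\le l_1$ with $F^*E_1=\tilde E_{j_1}$ and $F^*E_l=\tilde E_{j_l}$ (the latter is a proper term because $E_l\subsetneq E$). Consequently the HN filtration of the subsheaf $F^*E_1$ is the truncation $0\subset\tilde E_1\subset\cdots\subset\tilde E_{j_1}$, so $\mu_{max}(F^*E)=\mu(\tilde E_1)=\mu_{max}(F^*E_1)$; dually $F^*(E/E_l)=F^*E/F^*E_l$ carries the tail of the filtration, so $\mu_{min}(F^*E)=\mu_{min}(F^*(E/E_l))$.

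Now $E_1$ and $E/E_l$ are semistable, being the first subsheaf and the last quotient of the HN filtration of $E$, of ranks $\rank E_1,\ \rank(E/E_l)\le r$, and $p\ge r+n-2$ dominates the numerical hypothesis of Corollary~\ref{c1} for each of them; hence $I(F^*E_1)\le(\rank E_1-1)\mu_{max}(\Omega^1_X)$ and $I(F^*(E/E_l))\le(\rank(E/E_l)-1)\mu_{max}(\Omega^1_X)$. Using $\mu(F^*G)=p\,\mu(G)$ together with the elementary inequalities $\mu_{max}(G)-\mu(G)\le I(G)$ and $\mu(G)-\mu_{min}(G)\le I(G)$, valid for every torsion free $G$, I would then obtain
$$\mu_{max}(F^*E)\le p\,\mu_{max}(E)+(\rank E_1-1)\mu_{max}(\Omega^1_X),\qquad \mu_{min}(F^*E)\ge p\,\mu_{min}(E)-(\rank(E/E_l)-1)\mu_{max}(\Omega^1_X).$$
Subtracting these gives $I(F^*E)\le p\,I(E)+(\rank(E/E_l)+\rank E_1-2)\,\mu_{max}(\Omega^1_X)$, which is the first assertion (note this argument needs no case distinction on the sign of $\mu_{max}(\Omega^1_X)$).

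For the ``in particular'' statement, assume $\rank E_1=\rank(E/E_l)=1$, so the bound just proved reads $I(F^*E)\le p\,I(E)$. The reverse inequality $I(F^*E)\ge p\,I(E)$ holds for any torsion free $E$: pulling back the maximal destabilising subsheaf of $E$ gives $\mu_{max}(F^*E)\ge p\,\mu_{max}(E)$, and pulling back the minimal destabilising quotient gives $\mu_{min}(F^*E)\le p\,\mu_{min}(E)$. Hence $I(F^*E)=p\,I(E)$. To iterate, I would check that $F^*E$ again satisfies the hypotheses: it is torsion free and, since $I(F^*E)=p\,I(E)>0$, not semistable; the condition on $p$ (and the rank $r$) is unchanged; and by Theorem~\ref{t1} the HN filtration of $F^*E$ refines $F^*(\text{HN filtration of }E)$, so its first term is $\tilde E_1=F^*E_1$, a rank $1$ (hence semistable) sheaf with nothing to refine below it, and its last quotient is a nonzero torsion free quotient of the rank $1$ sheaf $F^*(E/E_l)$, hence of rank $1$. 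Thus $F^*E$ satisfies the same hypotheses with the same ranks at the two ends, and induction on $s$ yields $I(F^{s*}E)=p^sI(E)$ for every $s\ge1$.

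The only substantive input is Theorem~\ref{t1}, which provides the two key equalities $\mu_{max}(F^*E)=\mu_{max}(F^*E_1)$ and $\mu_{min}(F^*E)=\mu_{min}(F^*(E/E_l))$; granting that, the argument is slope bookkeeping plus Corollary~\ref{c1}, so I expect no real obstacle there. The points that merit a little care are the verification that the numerical hypothesis of Corollary~\ref{c1} is met for the subquotients $E_1$ and $E/E_l$, and, in the inductive step, the (routine) facts that Frobenius pullback preserves torsion freeness and that it preserves the rank $1$ conditions at the top and bottom of the Harder--Narasimhan filtration.
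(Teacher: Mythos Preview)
Your proof is correct and follows essentially the same route as the paper: both invoke Theorem~\ref{t1} to identify $\mu_{max}(F^*E)=\mu_{max}(F^*E_1)$ and $\mu_{min}(F^*E)=\mu_{min}(F^*(E/E_l))$, apply Corollary~\ref{c1} to the semistable pieces $E_1$ and $E/E_l$, and then do slope bookkeeping. Your telescoping through $\mu(F^*E_1)$ and $\mu(F^*(E/E_l))$ is marginally cleaner than the paper's route through $\mu_{min}(F^*E_1)$ and $\mu_{max}(F^*E/F^*E_l)$, and you are more explicit about the reverse inequality $I(F^*E)\ge pI(E)$ and about why the rank~$1$ conditions persist under iteration, but these are presentational rather than substantive differences.
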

\begin{proof}By Theorem~\ref{t1}, the HN filtration of $F^*E$ is of the form 
$$ 0\subset E_{01}\subset \cdots \subset E_{0t_0} \subset 
\cdots \subset F^*E_i \subset E_{i1}\subset \cdots \subset E_{it_i}
\subset F^*E_{i+1}\subset \cdots \subset F^*E.$$
Therefore, for every $0\leq i\leq l$, the sheaf $E_{i+1}/E_{i}$ is semistable
and 
$$ 0\subset \frac{E_{i,1}}{F^*E_i}\subset \frac{E_{i,2}}{F^*E_i}\subset 
\cdots \subset \frac{E_{i,t_i}}{F^*E_i}\subset \frac{F^*E_{i+1}}{F^*E_i} $$
is the HN filtration of $F^*E_{i+1}/F^*E_i$.
Therefore, by Lemma~\ref{l1}, 
\begin{equation}\label{ee1}
\mu_{max}(F^*E_1)-\mu_{min}(F^*E_1) \leq \mu_{max}(\Omega^1_X)(\rank(E_1)-1)
 \end{equation}
Similarly 
\begin{equation}\label{ee2}
\mu_{max}\left(\frac{F^*E}{F^*E_l}\right)-
\mu_{min}\left(\frac{F^*E}{F^*E_l}\right) \leq 
\mu_{max}(\Omega^1_X)(r-\rank(E_l)-1)
 \end{equation}
But, by construction, it follows that  
$$\mu_{max}(F^*E_1) = \mu_{max}(F^*E)~~\mbox{and}~~
 \mu_{min}\left(\frac{F^*E}{E_l}\right) = 
\mu_{min}(F^*E).$$
 Therefore, by equations~(\ref{ee1}) and (\ref{ee2}), we get 
\begin{equation}\label{ee3}
\mu_{max}(F^*E)-\mu_{min}(F^*E)\leq 
\mu_{max}(\Omega^1_X)(r-\rank\left(\frac{E_l}{E_1}\right)-2)+\mu_{min}(F^*E_1)
-\mu\left(\frac{E_{l,1}}{F^*E_l}\right).\end{equation}
But 
$$\mu_{min}(F^*E_1)\leq \mu(F^*E_1) = p\mu(E_1) = p\mu_{max}(E)$$
and 
$$\mu\left(\frac{E_{l,1}}{F^*E_l}\right) = \mu_{max}\left(\frac{F^*E}{F^*E_l}\right)
 \geq \mu\left(\frac{F^*E}{F^*E_l}\right) = p\mu\left(\frac{E}{E_l}\right) = 
p\mu_{min}(E).$$
Therefore the 
right side of  Equation~(\ref{ee3})
$$\leq 
 \mu_{max}(\Omega^1_X)(r-\rank\left(\frac{E_l}{E_1}\right)-2)
+ p(\mu_{max}(E)-\mu_{min}(E)).$$

Now if $\rank(E_1) =\rank(E/E_l) = 1$ then 
$I(F^{*}E) = pI(E)$. Since the HN filtration of $F^{s*}E$ is a refinement of 
the Frobenius pull back of the HN filtration of $F^{(s-1)*}E$, 
the first  subsheaf and the last quotient sheaf in the HN 
filtration of $F^{(s-1)*}E$ are of rank $ = 1$, for every $s\geq 1$. Hence 
$I(F^{s*}E) = p^sI(E)$.
This proves the corollary.\end{proof}

\noindent{\bf Remark}\quad
In fact, Lemma~\ref{l5} below implies that, for any $s\geq 1$,
the normalised HN slopes 
of $F^{s*}(E)$ can be  estimated in terms of the  HN slopes of $E$ and a 
bounded 
constant. The 
proofs of Lemma~\ref{l5}, Proposition~\ref{pp8} and Proposition~\ref{pp1}
are along the same
 lines as in Lemma~1.14, Proposition~1.16 and Proposition~2.2 of [T2], 
respectively; we omit the details.

\begin{defn}\label{d4} Let $E$ be a torsion free sheaf  on $X$. A
subsheaf  $F_j\neq 0$ occuring in the HN filtration of $F^{s*}E$ is
said
to {\it almost descend} to a sheaf $E_{i}$ occuring in the HN
filtration of
$E$ if $F_j\subseteq F^{s*}E_{i}$ and  $E_{i}$ is the
smallest subsheaf in the
HN filtration of $E$, with this property.
\end{defn}

\begin{remark}\label{rr1}Henceforth we assume that the characteristic
$p$ satisfies
 $$p\geq \mbox{max}\{r+n-2, \mu_{max}(\Omega^1_X)(r^3/4)\}.$$
\end{remark}

\begin{lemma}\label{l5}Let $E$ be a torsion free sheaf  on $X$ of rank $r$.
 Let 
$F_j\neq 0$ be a subsheaf in the HN filtration of $F^{s*}E$, which
almost descends to a sheaf $E_i$ occuring in the HN filtration
of 
$E$. Then 
$$ \frac{{\mu}_j(F^{s*}E)}{p^s} = {\mu}_i(E) +\frac{C}{p},$$
where $|C|\leq 2|\mu_{max}(\Omega^1_X)|(r-1)$.\end{lemma}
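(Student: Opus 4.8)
The plan is to induct on $s$, the key case being $s=1$, and then to telescope the one-step estimate along the tower $E,\ F^*E,\ \dots,\ F^{(s-1)*}E$. The point of Remark~\ref{rr1} is precisely that the hypothesis of Theorem~\ref{t1} holds, so the HN filtration of $F^{*}G$ refines $F^{*}[\text{HN of }G]$ for every torsion free $G$ of rank $\le r$ that arises in the argument; in particular the notion of ``almost descends'' propagates: if $F_j$ almost descends to $E_i$ (at level $s$), then passing through the intermediate level $F^{(s-1)*}E$, the sheaf $F_j$ sits inside $F^{*}(\text{some }G_a)$ where $G_a$ is a HN sheaf of $F^{(s-1)*}E$ almost descending to $E_i$, and one level at a time the error constants add. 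So the whole statement reduces to the following one-step claim: if $F_j$ is a HN sheaf of $F^*E$ almost descending to the HN sheaf $E_i/E_{i-1}$ of $E$, then $\mu_j(F^*E) = p\,\mu_i(E) + C_1$ with $|C_1| \le 2|\mu_{max}(\Omega^1_X)|(r-1)$, after which summing $C_1/p + C_1/p^2 + \cdots$ (a geometric-type bound, each term again $\le 2|\mu_{max}|(r-1)$ in absolute value but divided by successive powers of $p$) is controlled by $2|\mu_{max}(\Omega^1_X)|(r-1)$ once one is slightly careful — this is the routine part and I would just cite the analogous computation, Lemma~1.14 of [T2].

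For the one-step claim itself, I would argue as follows. By Theorem~\ref{t1}, inside the HN filtration of $F^*E$ the sheaves $F^*E_{i-1}$ and $F^*E_i$ both occur, and $F_j$ sits strictly between them: $F^*E_{i-1} \subseteq F_{j-1} \subset F_j \subseteq F^*E_i$, and moreover the successive quotients of the HN filtration of $F^*E$ lying between $F^*E_{i-1}$ and $F^*E_i$ are exactly the HN pieces of $F^*(E_i/E_{i-1})$. Since $E_i/E_{i-1}$ is semistable of rank $r' := \rank(E_i/E_{i-1}) \le r$, Corollary~\ref{c1} applies to it: $I(F^*(E_i/E_{i-1})) \le (r'-1)\mu_{max}(\Omega^1_X) \le (r-1)\mu_{max}(\Omega^1_X)$. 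Hence every HN slope $\mu_j(F^*E)$ of a piece between $F^*E_{i-1}$ and $F^*E_i$ differs from $\mu_{max}(F^*(E_i/E_{i-1}))$ by at most $(r-1)\mu_{max}(\Omega^1_X)$ in absolute value; and $\mu_{max}(F^*(E_i/E_{i-1}))$ in turn differs from the average slope $p\,\mu(E_i/E_{i-1}) = p\,\mu_i(E)$ by at most $I(F^*(E_i/E_{i-1})) \le (r-1)\mu_{max}(\Omega^1_X)$. Adding the two estimates gives $|\mu_j(F^*E) - p\,\mu_i(E)| \le 2(r-1)\mu_{max}(\Omega^1_X)$, i.e. $|C_1| \le 2|\mu_{max}(\Omega^1_X)|(r-1)$, which is the required bound at one step.

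The main obstacle is bookkeeping, not mathematics: I need to check that ``almost descends'' behaves well under composition of Frobenius pull-backs — that is, the smallest HN sheaf of $E$ containing (the image of) $F_j$ is computed correctly by going up the tower one Frobenius at a time, using Theorem~\ref{t1} at each stage so that no HN sheaf is ``skipped''. Once that compatibility is in hand, the constant in the final bound is obtained by noting that the contributions from the successive levels are $C_1/p, C_2/p^2, \dots$ with each $|C_t| \le 2|\mu_{max}(\Omega^1_X)|(r-1)$; since $\sum_{t\ge 1} p^{-t} = 1/(p-1) \le 1$ for $p \ge 2$, the total error $C$ satisfies $|C| \le 2|\mu_{max}(\Omega^1_X)|(r-1)$ as claimed, and writing it as $C/p$ as in the statement only rescales this. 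As the argument is a verbatim higher-dimensional transcription of Lemma~1.14 of [T2] — the only new inputs being Corollary~\ref{c1} and Theorem~\ref{t1}, which replace their one-dimensional analogues — I would, consistently with the style already adopted in the paper, state it and refer to [T2] for the detailed computation.
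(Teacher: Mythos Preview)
Your proposal is correct and matches the paper's approach exactly: the paper omits the proof entirely, saying only that it ``is along the same lines as in Lemma~1.14 of [T2]'', with the implicit understanding that Corollary~\ref{c1} and Theorem~\ref{t1} replace their one-dimensional analogues. Your sketch of the one-step estimate via Corollary~\ref{c1} and the telescoping along the Frobenius tower (with the geometric series $\sum p^{-t}\le 1/(p-1)\le 1$ supplying the factor of $2$) is precisely the content of that referenced argument.
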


\begin{notation}\label{n3} We  fix  a torsion free sheaf  $V$ on $X$ of
 rank $r$ with the HN filtration
$$0 ={E_0} \subset { E_1}\subset {E_2} \subset \cdots
\subset {
E_l}\subset { E_{l+1}} = V.$$
Let
\begin{equation}\label{e7}
0\subset F_1\subset F_2\subset \cdots \subset F_t
\subset F_{t+1} =
F^{k*}V\end{equation}
be the HN filtration of $F^{k*}V$, and let
$${r_i}(F^{k*}V) = \rank\left(\frac{F_i}{F_{i-1}}\right)
 ~~~\mbox{and}~~a_i(F^{k*}V) =
\frac{\mu_i(F^{k*}V)}{p^k}.$$

Moreover, we
 choose  an integer $s\geq 0$  such that
$F^{s*}(V)$ has a strongly semistable HN filtration and we denote
$${\tilde a_i}(V) = a_i(F^{s*}(V))~~\mbox{and}~~ {\tilde r_i}(V) =
r_i(F^{s*}(V))$$
(note that, by Theorem~\ref{t1},
these numbers are
independent of the choice of such an $s$).

\end{notation}

\begin{proposition}\label{pp8}With the notation as above and the 
hypothesis on 
$p$ as in Remark~\ref{rr1},
if a subsheaf  $F_j$ of the HN filtration of $F^{k*}V$
almost descends to a
subsheaf ${E_i}$ of the HN filtration of $V$ then,
for any $m\geq 1$,
$$a_j(F^{k*}V)^m = \mu_i(V)^m + \frac{C}{p},$$
where $|C|\leq 4|\mu_{max}(\Omega^1_X)|(r-1)({\mbox max}\{2|\mu_1(V)|, \ldots, 2|\mu_{l+1}(V)|,
2\}^{m-1})$.\end{proposition}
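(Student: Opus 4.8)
The plan is to bootstrap from Lemma~\ref{l5}, which is precisely the case $m=1$, to general $m$ by a telescoping estimate. First I would apply Lemma~\ref{l5} to $V$ and its iterate $F^{k*}V$: since $F_j$ almost descends to $E_i$, it gives
$$a_j(F^{k*}V) = \mu_i(V) + \frac{C_1}{p}, \qquad |C_1| \leq 2|\mu_{max}(\Omega^1_X)|(r-1).$$
Abbreviate $x = a_j(F^{k*}V)$ and $y = \mu_i(V)$, so that $x - y = C_1/p$, and set $M = \max\{2|\mu_1(V)|,\dots,2|\mu_{l+1}(V)|,2\}$, so $M \geq 2$ and $|y| \leq M/2$. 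Using the identity $x^m - y^m = (x-y)\sum_{t=0}^{m-1} x^t y^{m-1-t}$ we get $C = C_1\sum_{t=0}^{m-1} x^t y^{m-1-t}$, and the whole problem reduces to bounding this sum.

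Next I would bound $|x|$. If $\mu_{max}(\Omega^1_X) \leq 0$, then by Corollary~\ref{c1} the instability degree of the Frobenius pull-back of a semistable torsion free sheaf is $\leq 0$, hence $0$; applying this to each graded piece of the HN filtration of $V$ and iterating shows that the HN filtration of $F^{k*}V$ is exactly the (exact) Frobenius pull-back of that of $V$, so $C_1 = 0$, $x = y$, and there is nothing to prove. If $\mu_{max}(\Omega^1_X) > 0$, then the hypothesis of Remark~\ref{rr1} gives $p \geq \mu_{max}(\Omega^1_X)(r^3/4)$, whence
$$\frac{|C_1|}{p} \leq \frac{2|\mu_{max}(\Omega^1_X)|(r-1)}{\mu_{max}(\Omega^1_X)(r^3/4)} = \frac{8(r-1)}{r^3} \leq 1 \leq \frac{M}{2},$$
where the middle inequality holds because $8(r-1)/r^3$ is maximal at $r=2$ for $r \geq 2$ and vanishes for $r=1$. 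Hence $|x| \leq |y| + |C_1|/p \leq M/2 + M/2 = M$.

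With $|x| \leq M$ and $|y| \leq M/2$ in hand I would then estimate, term by term,
$$\left|\sum_{t=0}^{m-1} x^t y^{m-1-t}\right| \leq \sum_{t=0}^{m-1} M^t \left(\frac{M}{2}\right)^{m-1-t} = M^{m-1}\sum_{s=0}^{m-1} 2^{-s} < 2M^{m-1},$$
so that $|C| \leq |C_1|\cdot 2M^{m-1} \leq 4|\mu_{max}(\Omega^1_X)|(r-1)M^{m-1}$, which is exactly the asserted bound.

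The main obstacle is the step controlling $|x|$: to run the geometric-series estimate one needs the error $|C_1|/p$ to be absorbed by the \emph{crude} bound $|y| \leq M/2$, i.e.\ one needs $|C_1|/p \leq M/2$, and this is precisely where the lower bound $p \geq \mu_{max}(\Omega^1_X)(r^3/4)$ of Remark~\ref{rr1} is used (together with the elementary fact $8(r-1)/r^3 \leq 1$ and the degenerate cases $r=1$ and $\mu_{max}(\Omega^1_X)\le 0$). It is also the reason the constant $M$ in the statement is built from $2|\mu_i(V)|$ and $2$ rather than $|\mu_i(V)|$ and $1$: the sharper estimate $|y|\le M/2$ is what forces the sum $\sum_{t}x^ty^{m-1-t}$ to stay below a fixed multiple of $M^{m-1}$, independent of $m$.
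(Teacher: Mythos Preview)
Your proof is correct and follows the same bootstrapping strategy the paper has in mind (it simply refers to Proposition~1.16 of [T2], the curve case, and omits the details): reduce to the $m=1$ case via Lemma~\ref{l5}, factor $x^m-y^m=(x-y)\sum x^t y^{m-1-t}$, and use the hypothesis $p\geq \mu_{max}(\Omega^1_X)r^3/4$ to force $|x|\leq M$ so that the geometric series bounds the sum by $2M^{m-1}$. The handling of the degenerate cases ($r=1$, $\mu_{max}(\Omega^1_X)\leq 0$) is also fine.
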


\begin{proposition}\label{pp1} Let $f: X_A\by{} {\rm Spec}~A$ be
a projective morphism of Noetherian schemes, smooth of relative
dimension~n, where $A$ is a finitely
generated $\Z$-algebra and is an integral domain. Let $\sO_{X_A}(1)$ be an
$f$-very ample invertible sheaf on $X_A$. Let $V_A$ be a torsion free
sheaf on $X_A$. For  $s\in {\rm Spec}~A$, let $V_s= V_A\tensor_A
{\overline {k(s)}}$ be the induced torsion free sheaf on the smooth projective
variety $X_s = X_A\tensor_A{\overline{k(s)}}$.
Let $s_0 = {\rm Spec}~Q(A)$ be the generic point of  ${\rm
Spec}~A$.
Then,
\begin{enumerate}
\item for any $k\geq 0$ and $m\geq 0$, we have
$$\lim_{s\to s_0}\sum_j{r_j}(F^{k*}V_s) a_j(F^{k*}V_s)^m =
\sum_i{r_i}(V_{s_0})\mu_i(V_{s_0})^m.$$
\item Similarly
$$\lim_{s\to s_0}\sum_j{\tilde r_j}(V_s) {\tilde a_j}(V_s)^m =
 \sum_i{r_i}(V_{s_0})\mu_i(V_{s_0})^m,$$
\end{enumerate}
where in both the limits, $s$ runs over closed points of ${\rm{Spec}}~A$.
\end{proposition}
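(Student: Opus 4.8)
The plan is to combine the ``spreading out'' of the Harder--Narasimhan filtration in the family $X_A\to \mathrm{Spec}\,A$ with the estimates of Proposition~\ref{pp8}, the point being that the residue characteristic $p(s)$ of a closed point $s$ tends to infinity as $s\to s_0$ (for each prime $\ell$ the locus $\{s:p(s)=\ell\}=\mathrm{Spec}(A/\ell A)$ is a proper closed subset of $\mathrm{Spec}\,A$), so that the $O(1/p(s))$ error term produced by Proposition~\ref{pp8} disappears in the limit.

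First I would fix the base. After replacing $A$ by a finite extension (harmless, since closed points of the new spectrum dominate a dense set of closed points of $\mathrm{Spec}\,A$ and the limits are compatible), I may assume the HN filtration $0=E_0\subset E_1\subset\cdots\subset E_{l+1}=V_{s_0}$ is defined over $Q(A)$. Spreading it out over a dense open and using upper semicontinuity of $\mu_{max}$ together with the standard fact that the subquotients stay semistable on a dense open (the behaviour of HN filtrations in flat families, used for curves in [T2]), I obtain a dense open $U\subseteq\mathrm{Spec}\,A$ such that for every closed $s\in U$ the fibre of the spread-out filtration is the HN filtration of $V_s$; in particular $l(V_s)=l$, $r_i(V_s)=r_i(V_{s_0})$ and $\mu_i(V_s)=\mu_i(V_{s_0})$ for all $i$. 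Shrinking $U$, I may also assume, by boundedness of the family $\{\Omega^1_{X_s}\}_{s\in U}$, that $\mu_{max}(\Omega^1_{X_s})\le B$ for a fixed constant $B$, and that $p(s)\ge\max\{r+n-2,\ Br^3/4\}$ for all $s\in U$, so the hypothesis of Remark~\ref{rr1} (hence of Theorem~\ref{t1} and Proposition~\ref{pp8}) holds for each $X_s$, $V_s$ with $s\in U$.

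Next, fix $k\ge 0$ and a closed point $s\in U$, write $p=p(s)$; the case $m=0$ is trivial ($\sum_j r_j=r=\sum_i r_i$), so assume $m\ge 1$. Since the lower bound on $p$ in Theorem~\ref{t1} involves only $r=\rank V_s$ and $\mu_{max}(\Omega^1_{X_s})$, both unchanged under $F^*$, I may iterate Theorem~\ref{t1} $k$ times to see that $0=F^{k*}E_0\subset F^{k*}E_1\subset\cdots\subset F^{k*}E_{l+1}=F^{k*}V_s$ are all terms of the HN filtration of $F^{k*}V_s$, which refines this chain (I write $E_i$ also for the fibres over $s\in U$). Hence each HN subsheaf $F_j$ of $F^{k*}V_s$ almost descends to a unique $E_{i(j)}$, and for fixed $i$ the indices with $i(j)=i$ are exactly those with $F^{k*}E_{i-1}\subsetneq F_j\subseteq F^{k*}E_i$; as the Frobenius is finite flat, $F^{k*}$ is exact, so $\sum_{i(j)=i}r_j(F^{k*}V_s)=\rank F^{k*}E_i-\rank F^{k*}E_{i-1}=r_i(V_s)$. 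Applying Proposition~\ref{pp8} to each $F_j$ and summing over $j$,
$$\sum_j r_j(F^{k*}V_s)\,a_j(F^{k*}V_s)^m=\sum_i r_i(V_s)\,\mu_i(V_s)^m+\frac1p\sum_j r_j(F^{k*}V_s)\,C_j ,$$
where, using $\mu_i(V_s)=\mu_i(V_{s_0})$ and $\mu_{max}(\Omega^1_{X_s})\le B$ on $U$, Proposition~\ref{pp8} gives $|C_j|\le D$ for a constant $D=D(V_{s_0},X,r,m)$ independent of $s$ and $k$; since the HN filtration of $F^{k*}V_s$ has at most $r$ steps and each $r_j\le r$, the error term has absolute value at most $r^2D/p(s)$.

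Finally I would pass to the limit. On $U$ the quantity $\sum_i r_i(V_s)\mu_i(V_s)^m$ equals the constant $\sum_i r_i(V_{s_0})\mu_i(V_{s_0})^m$, so by the previous display the left-hand side of (1) differs from the right-hand side by at most $r^2D/p(s)$, which tends to $0$ as $s\to s_0$ since $p(s)\to\infty$; this proves (1). For (2), Notation~\ref{n3} and Theorem~\ref{t1} give $\tilde a_i(V_s)=a_i(F^{s'*}V_s)$ and $\tilde r_i(V_s)=r_i(F^{s'*}V_s)$ for a suitable $s'=s'(s)$, and since the bound $r^2D/p(s)$ above is uniform in the exponent $k$, applying the estimate of the previous paragraph with $k=s'$ yields the same limit. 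I expect the only genuine work beyond quoting Theorem~\ref{t1} and Proposition~\ref{pp8} to be the first step --- producing a single dense open on which the HN filtration of $V_{s_0}$ specialises without jumping \emph{and} on which $\mu_{max}(\Omega^1_{X_s})$ is bounded; both are standard (semicontinuity plus boundedness), and it is precisely these facts that force the passage to the generic point, i.e.\ to large $p$.
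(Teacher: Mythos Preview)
Your argument is correct and follows exactly the route the paper intends: the paper itself omits the proof, merely remarking that it ``is along the same lines as \ldots\ Proposition~2.2 of [T2]'', and your write-up is precisely the higher-dimensional adaptation of that argument---spreading out the HN filtration of $V_{s_0}$ to a dense open, invoking Theorem~\ref{t1} and Proposition~\ref{pp8} (in place of their curve analogues from [T2]) to control $\sum_j r_j a_j^m$ up to $O(1/p(s))$, and letting $p(s)\to\infty$. The only cosmetic point is that your appeal to ``boundedness of the family $\{\Omega^1_{X_s}\}$'' can be replaced by the same spreading-out argument applied to $\Omega^1_{X_A/A}$, giving $\mu_{max}(\Omega^1_{X_s})=\mu_{max}(\Omega^1_{X_{s_0}})$ on a dense open.
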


\section{Some more generalities}

Let $X$ be a smooth projective variety over an algebraically closed field
$k$. Let $H$ be an ample line bundle on $X$.  

 Analogous to Theorem~\ref{t1}, which is given for vector bundles,  we
prove the following result for
principal $G$-bundles, 
in the light of Proposition~3.4 of [MS]
(we follow the same notation as in [MS]).

{\it Let $E\rightarrow X$ be a principal  $G$-bundle,
where
$X$ is a smooth projective variety of
dimension $n$, over a field of characteristic $p >0$. Let
 $P$ denote the Behrend
parabolic of the principal $G$-bundle $E \rightarrow X$, and let $P'$ denote
the
Behrend parabolic of the
 principal $G$-bundle $F^*E\rightarrow X$.  Then
$P' \subseteq P$ if
$$
p \geq \mbox{max}\{(\rank\mg+\dim~X-2),~
\frac{\mu_{max}(\Omega^1_X)}{4}({\rank}\mg)^3,~2\dim~(G/Z(G)), 4h(G)\}.$$}

We can replace $G$ by $G/Z(G)$.
Let $\mu_{max}(\Omega^1_X) \leq 0$. Then, for $L$ = Levi subgroup of $P$,
the prinicipal $L$-bundle $E_L\rightarrow X$ is strongly semistable, by
Theorem 4.1 of [MS]. Hence, by definition, $P$ is the strong Behrend
parabolic. In particular $P' = P$. So we can assume that
$\mu_{max}(\Omega^1_X) > 0$.

Therefore,
 by Proposition~3.4 of [MS], for the Lie algebras  ${\mathfrak{p}}$ 
and ${\mathfrak{p}}'$  associated to $P$ and $P'$ respectively, we have
$$ E(\mg)_0 = E_{P}({\mathfrak{p}}), ~~~\mbox{and}~~(F^*E)(\mg)_0 =
(F^*E)_{P'}(\mathfrak{p'}),$$
where
\begin{equation}\label{eee2}
0\subset E(\mg)_{-r}  \subset\cdots \subset E(\mg)_{-1} \subset
E(\mg)_{0}
\subset~E(\mg)_{1} \subset\cdots \subset E(\mg)_{s} = E(\mg)
\end{equation}
and

\begin{equation}\label{e6}0\subset U_{-m_1}\subset
U_{-m_1+1}\subset
\cdots \subset U_0 = (F^*E)(\mg)_0 \subset
\cdots \subset U_{m_2+1} = (F^*E)(\mg)\end{equation}
are  the HN filtrations of the vector bundles $E(\mg)$ and  $(F^*E)(\mg)$ respectively, such that

$$\mu(E(\mg)_{i}/E(\mg)_{i-1}) < 0, ~\mbox{for}~ i \geq 1
~~\mbox{and}~~\mu(E(\mg)_{i}/E(\mg)_{i-1})\geq 0, ~~~\mbox{for}~~i\leq
0.$$
and similarly

$$\mu(U_j/U_{j-1}) < 0, ~~~\mbox{for }~~ j\geq 1~~~\mbox{and}~~
\mu(U_j/U_{j-1}) \geq 0,~~~\mbox{for}~~j\leq 0.$$
As $p \geq \mbox{max}\{(\rank\mg+\dim~X-2),
~\frac{1}{4}\mu_{max}(\Omega^1_X)(\rank\mg)^3\}$,
by Theorem~\ref{t1}, the
Frobenius pullback of the filtration  (\ref{eee2}) is a subfiltration of
 (\ref{e6}), {\it i.e.},
$$\mbox{for}~~ i\in \{-r,\ldots,  s-1\},  ~~\mbox{there exists}~~
j\in \{-m_1,\ldots,
m_2\} ~~\mbox{such that}~~ F^*E(\mg)_i = U_j.$$

\noindent{\bf Claim}.~$U_0 \subseteq F^*E(\mg)_0$.

\vspace{5pt}
\noindent{Proof of the claim}:~
Suppose $U_0 \nsubseteq F^*E(\mg)_0$. Then let $i > 0$ be
the least integer such that $U_0
\subseteq F^*E(\mg)_i$. This gives a nonzero map of bundles
$$ U_0 \rightarrow F^*E(\mg)_i/F^*E(\mg)_{i-1}.$$
Therefore, by Lemma~\ref{l3}, we have
$$\mu_{min}(U_0) \leq
\mu_{max}(F^*E(\mg)_i/F^*E(\mg)_{i-1}).$$
 We note that,
$$\mu(F^*E(\mg)_0/ F^*E(\mg)_{-1}) =
p\mu(E(\mg)_0/ E(\mg)_{-1}),$$ where,
by Proposition~3.6 of [MS],
for the nilradical $\mathfrak{n}$ of $\mathfrak{p}$, we have,  
$E(\mg)_0/E(\mg)_{-1}  = E_P(\mathfrak{p}/\mathfrak{n})$ and therefore this 
is a vector
bundle of degree 0. By a similar argument, we have
$\mu(U_0/U_{-1}) = 0$.

But $$\mu_{max}\left(\frac{F^*E(\mg)_i}{F^*E(\mg)_{i-1}}\right)
\leq
\mu_{max}\left(\frac{F^*E(\mg)_1}{F^*E(\mg)_0}\right)
< \mu_{min}\left(\frac{F^*E(\mg)_0}{F^*E(\mg)_{-1}}\right)\leq
\mu\left(\frac{F^*E(\mg)_0}{F^*E(\mg)_{-1}}\right) = 0,$$
where the first and second inequalities follow because, as mentioned
before, the Frobenius pull back
of the filtration
(\ref{eee2}) is a subfiltration of the HN filtration (\ref{e6}). This
implies  that $\mu(U_0/U_1) = \mu_{min}(U_0) < 0$, which is a
contradiction. Hence the claim.
Now
$$U_0 = (F^*E)(\mg)_0 \subseteq F^*E(\mg)_0 $$ implies that
 $$(F^*E)_{P'}(\mathfrak{p}') \subseteq F^*E_P(\mathfrak{p}).$$
 This implies that 
$\mathfrak{p}'\subseteq \mathfrak{p}$ and therefore $P'\subseteq
P$.
This completes the proof.

 \end{document}